\documentclass[11pt]{article}
\usepackage{amssymb}
\usepackage{multicol}
\usepackage{graphicx}
\usepackage{amsthm}
\usepackage{verbatim}
\usepackage{float}
\usepackage{forest,adjustbox}

\usepackage{pgfplots}
\pgfplotsset{compat=1.14}

\oddsidemargin 0.0 in
\textwidth 6.5 in
\topmargin 0.0 in
\headheight 0.0 in
\headsep 0.0 in
\textheight 8.75 in 
\usepackage{pgfplots}
\usepackage{amsmath, epsfig}
\setlength{\parskip}{8pt}
\newtheorem{theorem}{Theorem}
\newtheorem{lemma}{Lemma}
\newtheorem{proposition}{Proposition}

\newcommand{\p}{\mathbb{P}}

\renewcommand{\phi}{\varphi}

\newcommand{\ignore}[1]{}
\marginparwidth 40pt 
\marginparsep 10pt 

\theoremstyle{plain}
\newtheorem{thm}{Theorem}[section]

\newtheorem*{ques*}{Question}

\theoremstyle{definition}
\newtheorem{exmp}[thm]{Example}

\theoremstyle{remark}


\usepackage{color}
\definecolor{red}{rgb}{1,0,0}

\definecolor{blue}{rgb}{0,0,.7}

\title{An Extremal Problem on Rainbow Spanning Trees in Graphs}
\author{
Matthew DeVilbiss\thanks{Department of Mathematics, Statistics, and Computer Science. University of Illinois at Chicago.  {\tt mdevil2@uic.edu}.  This author's research supported in part by  NSF grant DMS-1343651.} \and Bradley Fain\thanks{Department of Mathematics. University of Delaware.  {\tt bfain@udel.edu}.  This author's research supported in part by  NSF grant DMS-1343651.} \and Amber Holmes\thanks{Department of Mathematics. University of Kentucky. {\tt Amber.Holmes@uky.edu}. This author's research partially supported by  NSF grant DMS-1343651.}\and Paul Horn\thanks{Department of Mathematics. University of Denver. {\tt paul.horn@du.edu}.  This author's research partially by NSF grant DMS-1343651, Simons Collaboration Grant 525039, and a University of Denver Internationalization Grant.} \and Sonwabile Mafunda\thanks{Department of Mathematics and Applied Mathematics. University of Johannesburg.  {\tt smafunda@uj.ac.za}.  This author's research partially supported by  NSF grant DMS-1343651 and the National Research Foundation in South Africa.} \and K.E. Perry\thanks{Mathematics. Soka University of America. {\tt kperry@soka.edu}.  This author's research partially supported by NSF grant DMS-1343651 and a University of Denver Internationalization Grant.}}

\begin{document}
 \maketitle 
 
  \begin{abstract}	
A spanning tree of an edge-colored graph is rainbow provided that each of its edges receives a distinct color. In this paper we consider the natural extremal problem of maximizing and minimizing the number of rainbow spanning trees in a graph $G$.  Such a question clearly needs restrictions on the colorings to be meaningful.  For  edge-colorings using $n-1$ colors and without rainbow cycles, known in the literature as JL-colorings, there turns out to be a particularly nice way of counting the rainbow spanning trees and we solve this problem completely for JL-colored complete graphs $K_n$ and complete bipartite graphs $K_{n,m}$.  In both cases, we find tight upper and lower bounds; the lower bound for $K_n$, in particular, proves to have an unexpectedly chaotic and interesting behavior. We further investigate this question for JL-colorings of general graphs and prove several results including characterizing graphs which have JL-colorings achieving the lowest possible number of rainbow spanning trees.  We establish other results for general $n-1$ colorings, including providing an analogue of Kirchoff's matrix tree theorem which yields a way of counting rainbow spanning trees in a general graph $G$.

\textit{Keywords}: rainbow spanning trees, JL-colorings
\end{abstract}

\section{Introduction} 

Let $G$ be a (not necessarily properly) edge-colored simple graph with $|V(G)| = n$.  A rainbow spanning tree (RST) in $G$ is an acyclic, connected, spanning subgraph such that the color of every edge is distinct.  Given a coloring $\varphi: E(G) \to \mathbb{N}$ let 
\[
\mathcal{R}(G,\varphi) = \{T \subseteq E(G): \mbox{$T$ is a rainbow spanning tree}\}.  
\] 
The study of rainbow spanning trees in complete graphs, and more general graphs, has attracted a great deal of attention lately, especially on work related to the Brualdi-Hollingsworth conjecture which posits that if the edges of $K_{2n}$ are colored via a one-factorization then the edge set can be partitioned into edge-disjoint RSTs.  See \cite{BH, CHH, FuLoPerryRodger, glock, Horn, HN, MPS, PS} for the conjecture and some recent developments along these lines.

In this paper we are concerned with a natural extremal problem regarding rainbow spanning trees: maximizing and minimizing $|\mathcal{R}(G,\varphi)|$ over a collection of colorings. One immediately notes that the problem, without restrictions on the colorings, is not interesting: any coloring with fewer than $n-1$ colors cannot possibly contain a rainbow spanning tree so for such a coloring, $|\mathcal{R}(G,\varphi)| = 0$.  On the other hand, if all edge colors are distinct, the number of RSTs is simply the number of trees in the graph.  This can be easily computed by the matrix tree theorem of Kirchoff (see \cite{k}) for a general graph $G$ and is $n^{n-2}$ by Cayley's formula for the special case where $G= K_n$ (see \cite{c}).

To make the problem interesting and non-trivial, and in the spirit of anti-Ramsey results, we consider this extremal problem on a certain class of colorings, known in the literature as {\it JL-colorings} \cite{ghnp, hhjo, jz,jo}. A coloring $\varphi: E(G) \to [n-1]$ is a JL-coloring if it is surjective and rainbow cycle free.  Note that these properties are rather delicately balanced with respect to an interplay between RSTs and cycles: if $n$ colors appear in an edge coloring, then $G$ necessarily contains a rainbow cycle, but if fewer than $n-1$ colors appear in an edge coloring, then no RSTs can exist.  

Given a JL-coloring $\varphi$, let $\mathcal{C}_1, \dots, \mathcal{C}_{n-1}$ denote the color classes of $\varphi$.  If a single edge of each color is selected, this gives $n-1$ edges of distinct colors; further, since $\varphi$ is a rainbow cycle free coloring, this collection of edges yields a rainbow spanning tree.  This simple observation means that for a JL-coloring $\varphi$,
\begin{equation}
|\mathcal{R}(G,\varphi)| = \prod_{i=1}^{n-1} |\mathcal{C}_i|.
\label{product}
\end{equation}
Further, since $\sum |C_i| = |E(G)|$, convexity immediately implies that
\begin{equation}
(|E(G)| - (n-2)) \cdot 1^{n-2} \leq  |\mathcal{R}(G, \varphi)| \leq \left( \frac{|E(G)|}{n-1} \right)^{n-1}.  \label{convexity!}
\end{equation}
How good are these particular estimates?  While both can be tight (simultaneously, in the case where $G$ is itself a tree), for the interesting special case where $G = K_{n}$ they are both far from tight.  In particular, we prove that
\begin{theorem} \label{tmain} 
Let $\varphi:E(K_n) \to [n-1]$ be a JL-coloring.  Then,
\[
2^{2n-O(\log n)} = \frac{\mu(n)}{n}  \leq |\mathcal{R}(K_n,\varphi)| \leq (n-1)!
\]
where $\mu(n)$ has the defining property that if $s$ is the unique power of 2 such that $\frac{n}{3} \leq s < \frac{2n}{3}$ then,
\[
\mu(n) = n \cdot \mu(s) \cdot \mu(n-s) 
\]
and $\mu(1)= 1$.  Both inequalities have colorings $\varphi$ for which they are tight.
\end{theorem}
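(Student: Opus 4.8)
\emph{Reduction to a recurrence.} The plan is to convert the product formula (i) into an integer recurrence governed by the recursive structure of JL-colorings, and then treat the maximization and the minimization separately. First I would establish (or invoke from the JL-coloring literature) the recursive \textbf{cut structure}: every JL-coloring of $K_n$ with $n \ge 2$ arises by fixing a nontrivial bipartition $V(K_n) = A \dotcup B$ with $|A| = a$, coloring all $a(n-a)$ edges of the $A$--$B$ cut with one new color, and JL-coloring $K_a$ on $A$ and $K_{n-a}$ on $B$ recursively. The monochromatic cut cannot sit on a rainbow cycle (such a cycle crosses it an even, hence $\ge 2$, number of times), the colors total $1+(a-1)+(n-a-1)=n-1$, and the fundamental-cycle argument shows conversely that no other JL-colorings occur. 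Since this correspondence is a bijection up to the choice of sub-colorings, formula (i) factors as
\[
|\mathcal{R}(K_n,\varphi)| = a(n-a)\,|\mathcal{R}(K_a,\varphi_A)|\,|\mathcal{R}(K_{n-a},\varphi_B)|.
\]
Writing $M(n)$ and $m(n)$ for the maximum and minimum of $|\mathcal{R}(K_n,\varphi)|$ over JL-colorings, both directions of the correspondence give $M(n)=\max_{1\le a\le n-1} a(n-a)M(a)M(n-a)$ and $m(n)=\min_{1\le a\le n-1} a(n-a)m(a)m(n-a)$, with $M(1)=m(1)=1$.

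\emph{The upper bound.} I would show $M(n)=(n-1)!$ by strong induction. Substituting $M(k)=(k-1)!$ gives $M(n)=\max_a a(n-a)(a-1)!(n-a-1)! = \max_a a!\,(n-a)! = \max_a n!/\binom{n}{a}$, and since $\binom{n}{a}\ge \binom{n}{1}=n$ for every $1 \le a \le n-1$, the maximum equals $(n-1)!$, attained at $a=1$. The corresponding extremal coloring peels off one vertex at each stage, which proves tightness.

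\emph{The lower bound and the main obstacle.} Setting $\mu(n) := n\,m(n)$ turns the minimization recurrence into $\mu(n) = n\,\min_{1\le a\le n-1}\mu(a)\mu(n-a)$, so the theorem reduces to the \textbf{key lemma} that this minimum is attained at $a=s$, the unique power of $2$ in $[n/3,2n/3)$. This is the crux and the principal difficulty. The obstruction is that $\mu(k)=\Theta(2^{2k}/k)$, so $\mu(a)\mu(n-a)=\Theta(2^{2n}/(a(n-a)))$ is essentially flat to leading order and the minimizer is decided entirely by lower-order terms --- the source of the advertised ``chaotic'' dependence on the binary expansion of $n$ (for instance, for $n=6$ the split $2+4$ beats the balanced $3+3$, since $\mu(2)\mu(4)=32<36=\mu(3)\mu(3)$). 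I would attack the lemma by strengthening the induction to carry tight two-sided estimates for $\mu$, using the exact anchor values $\mu(2^k)=2^{2^{k+1}-k-2}$, and then comparing $\mu(a)\mu(n-a)$ with $\mu(s)\mu(n-s)$ via a case analysis driven by the locations of $a$ and $n-a$ relative to the flanking powers of $2$.

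\emph{The asymptotic estimate.} Finally I would pin down the order of $\mu$. The lower bound $\mu(n)\ge 2^{2n}/(4n)$ follows by a clean induction: from $\mu(n)=n\mu(s)\mu(n-s)$ and the inductive hypothesis, the claim reduces to $4s(n-s)\le n^2$, which always holds, with equality exactly when $s=n/2$ (hence sharp on powers of $2$). A matching upper bound of the same order $\Theta(2^{2n}/n)$ is obtained similarly, using $s(n-s)\ge 2n^2/9$ on the admissible range $s\in[n/3,2n/3)$. Together these give $\mu(n)/n = \Theta(2^{2n}/n^2) = 2^{2n-O(\log n)}$, which is precisely the stated lower bound for $|\mathcal{R}(K_n,\varphi)|$.
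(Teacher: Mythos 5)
Your reduction to the recurrence and your upper-bound argument are correct and essentially identical to the paper's: the cut structure gives $|\mathcal{R}(K_n,\varphi)| = a(n-a)\,|\mathcal{R}(K_a,\varphi_A)|\,|\mathcal{R}(K_{n-a},\varphi_B)|$, and the maximization is settled by $a!\,(n-a)! = n!/\binom{n}{a} \leq (n-1)!$; likewise your lower estimate $\mu(n) \geq 2^{2n}/(4n)$ is the paper's Claim 1. The problem is that the piece you defer --- the key lemma that $\min_a \mu(a)\mu(n-a)$ is attained at the power of two $s \in [n/3, 2n/3)$ --- is the actual content of the theorem (the paper's Theorem \ref{t1}, occupying most of Section \ref{RST_Kn}), and your proposal offers only a plan of attack, not an argument. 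Worse, the plan as stated cannot work: as you yourself observe, all candidate products $\mu(a)\mu(n-a)$ agree to leading order, so ``tight two-sided estimates for $\mu$'' carried through an induction can never identify the minimizer. The paper instead works with the exact normalized quantity $\beta(n) = \mu(n)/\tau(n)$, $\tau(x) = 2^{2x-2}/x$, which satisfies $\beta(n) \geq 1$ with equality precisely at powers of two, and the induction is driven by a genuine recombination lemma (Lemma \ref{lcomparing}: given optimal splits $p = p_1+p_2$ and $q = q_1+q_2$, one has $\beta(p,q) \geq \frac{pq}{(p_1+q_1)(p_2+q_2)}\beta(p_1+q_1,p_2+q_2)$, and a one-sided variant) together with a two-case balancing/unbalancing analysis comparing an arbitrary split to $(s,n-s)$. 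No substitute for these mechanisms appears in your sketch, so the lower bound and its tightness remain unproven.

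In addition, the asymptotic step you do spell out is wrong: $\mu(n) = \Theta(2^{2n}/n)$ is \emph{false}. Indeed $\beta(n) \geq \tfrac{9}{8}$ for every $n$ that is not a power of two and $\limsup_n \beta(n) = \infty$, so no constant $C$ with $\mu(n) \leq C\,2^{2n}/n$ can exist. Your own induction exhibits the failure: assuming $\mu(k) \leq C\,2^{2k}/k$, the recurrence and $s(n-s) \geq 2n^2/9$ yield only $\mu(n) \leq \tfrac{9}{2}C^2 \cdot 2^{2n}/n$, and closing the induction needs $\tfrac{9}{2}C^2 \leq C$, i.e.\ $C \leq \tfrac{2}{9}$, which is incompatible with the base case $\mu(1)=1$ (forcing $C \geq \tfrac14$); even if you use $\mu(s) = 2^{2s-2}/s$ exactly, each level still costs an unavoidable factor $\tfrac{9}{8}$. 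The correct route (the paper's Proposition \ref{pfin}) accepts this loss but counts levels: since $\beta(s) = 1$, one gets $\beta(n) = \frac{n^2}{4s(n-s)}\beta(n-s) \leq \tfrac{9}{8}\beta(n-s)$, and since $n-s \leq \tfrac{2}{3}n$ the recursion has depth $O(\log n)$, whence $\beta(n) \leq (9/8)^{\log_{3/2} n} = n^{O(1)}$. This polynomial --- not constant --- correction factor still gives $\mu(n)/n = 2^{2n-O(\log n)}$, which is all the theorem needs, but your claimed $\Theta$ bound and the induction supporting it must be replaced by this depth-counting argument.
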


As we shall see, this gives a surprisingly (to us) chaotic lower bound for $|\mathcal{R}(G,\varphi)|$ (cf. Figure 3 in Section \ref{bfig} and the surrounding discussion), which grows exponentially in $n$, as opposed to the trivial linear lower bound in the inequality in \eqref{convexity!}. For $n \leq 14$, this evaluates to the lower bounds given below:

\[\begin{array}{|c||c|c|c|c|c|c|c|c|c|c|c|c|c|c|}
\hline 
n= & 2 & 3 & 4& 5&6&7&8&9&10&11&12 & 13 & 14 \\\hline \hline
|\mathcal{R}(K_n,\varphi)| \geq & 1 & 2 & 4 &12 & 32 &96& 256& 960& 3072 & 10752 &32768 & 122880 & 393216 \\
\hline
\end{array} 
\]

We further study the extremal problem on complete bipartite graphs, proving
 
 \begin{theorem} \label{tbip}
 	Let $\varphi:E(K_{n,m}) \to [nm-1]$ be a JL-coloring. Then for $n \leq m$,
 	\[
 	(n-1)(m-1) + 1 \leq |\mathcal{R}(K_{n,m},\varphi)| \leq m^{n-m+1}((m-1)!)^2.
 	\]
 	
 	Both inequalities have colorings $\varphi$ for which they are tight.
 \end{theorem}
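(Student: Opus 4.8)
The plan is to work directly with the product formula \eqref{product}. Write $c_i=|\mathcal{C}_i|$ for the sizes of the $n+m-1$ color classes of $\varphi$, so that $|\mathcal{R}(K_{n,m},\varphi)|=\prod_{i=1}^{n+m-1}c_i$, where each $c_i\ge 1$ and $\sum_i c_i=|E(K_{n,m})|=nm$.

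For the lower bound I would use the elementary fact that among positive integers with a fixed sum $S$ and a fixed number of terms $k$, the product is minimized by the vector $(S-k+1,1,\dots,1)$, so $\prod_i c_i\ge S-k+1$; this needs only a one-line induction. With $S=nm$ and $k=n+m-1$ this gives $\prod_i c_i\ge nm-(n+m-1)+1=(n-1)(m-1)+1$ for \emph{every} JL-coloring. For tightness I would exhibit the coloring realizing the size vector $((n-1)(m-1)+1,1,\dots,1)$: fix $x_1\in X$ and $y_1\in Y$, give each of the $m-1$ edges $x_1y_j$ ($j\ge 2$) and each of the $n-1$ edges $x_iy_1$ ($i\ge 2$) a private color, and color all remaining edges (the $(n-1)(m-1)$ edges $x_iy_j$ with $i,j\ge2$ together with $x_1y_1$) with one final color. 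This uses exactly $n+m-1$ colors; it is rainbow-cycle-free because any cycle either lies inside one of the two stars or crosses between $\{x_2,\dots,x_n,y_1\}$ and $\{x_1,y_2,\dots,y_m\}$ an even (hence $\ge2$) number of times, using the final color twice. Thus it is a JL-coloring attaining $(n-1)(m-1)+1$.

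The upper bound is where the JL restriction genuinely bites, since the near-balanced size vectors permitted by convexity are typically not realizable, so I would first establish a structural description. Claim: every JL-coloring of $K_{n,m}$ is the ``lca-coloring'' of a rooted binary tree $\mathcal{T}$ whose $n+m$ leaves are the vertices of $K_{n,m}$; one colors the edge $x_iy_j$ by the internal node $\operatorname{lca}(x_i,y_j)$, the $n+m-1$ colors are precisely the internal nodes, and the class at a node whose two child subtrees hold $X/Y$-counts $(a_1,b_1)$ and $(a_2,b_2)$ has size exactly $a_1b_2+a_2b_1$. To prove this I would fix a rainbow spanning tree $R$ (one edge per color); since every transversal of the color classes is again a rainbow spanning tree, each color $c$ forces its whole class $\mathcal{C}_c$ to cross the bipartition obtained by deleting $e_c$ from $R$, and a fundamental-cycle argument then locates a color class that is a \emph{complete} edge-cut $E(A,B)$. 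Deleting that class and recursing on the induced complete bipartite graphs on $A$ and on $B$ builds $\mathcal{T}$.

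This reduces the question to maximizing $\prod_{\text{internal nodes}}(a_1b_2+a_2b_1)$, i.e. to the recursion $f(a,b)=\max(a_1b_2+a_2b_1)\,f(a_1,b_1)\,f(a_2,b_2)$ over admissible splits (neither child an all-$X$ or all-$Y$ set of size $\ge2$). By induction on $a+b$ I would show the maximum is achieved by the caterpillar that repeatedly peels a single leaf off the \emph{larger} side; its value telescopes to $n^{\,m-n+1}\big((n-1)!\big)^2$, the extremal size multiset being $n$ with multiplicity $m-n+1$ together with each of $1,\dots,n-1$ with multiplicity two. Two ingredients are needed: (a) among caterpillars, an adjacent-transposition computation comparing the two peel orders at a state $(a,b)$ (products $b(a-1)$ versus $a(b-1)$) shows peeling the larger side first is optimal; and (b) caterpillars dominate all binary trees. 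I expect (b)---equivalently, that a split into two ``mixed'' parts (each meeting both sides) never beats a single-leaf peel---to be the main obstacle: after substituting the closed form it is a delicate inequality in the four split-parameters, and the cleanest route is probably a tree-rotation lemma showing any mixed--mixed split can be pushed, one leaf at a time and without decreasing the product, toward a leaf-peel, thereby reducing everything to case (a).
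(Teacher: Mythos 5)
Your lower bound is complete and coincides with the paper's own argument: your minimum--product fact is exactly the convexity bound \eqref{convexity!}, and your two-star construction (private colors on all edges at $x_1$ and at $y_1$, one big class on everything else) is precisely the paper's extremal coloring, verified rainbow-cycle-free by the same cut argument. (Also, your closed form $n^{m-n+1}((n-1)!)^2$ for $n\le m$ is the correct reading of the theorem; the printed statement has the roles of $n$ and $m$ garbled, and the paper's Theorem \ref{tnu} states it for $n\ge m$.) The upper bound, however, has two genuine gaps. The first is the structural claim that every JL-coloring of $K_{n,m}$ arises from a rooted binary tree. In the paper this is Proposition \ref{prop1}, quoted from prior work of Johnson et al.\ (a full paper in the bipartite case). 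Your opening move is fine: each class $\mathcal{C}_c$ lies inside the cut of $R-e_c$, since otherwise swapping $e_c$ for an offending edge of color $c$ creates a rainbow cycle. But that only gives \emph{containment} in a cut; in typical JL-colorings most classes are proper subsets of their cuts, and the entire difficulty is promoting \emph{some} class to a complete cut $E(A,B)$. The phrase ``a fundamental-cycle argument then locates a color class that is a complete edge-cut'' is not an argument; either supply this proof or cite the known result.

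The second gap is the central one: you explicitly leave ingredient (b) --- that a mixed--mixed split never beats a single-leaf peel --- unproven, and that inequality is the whole mathematical content of the upper bound. The good news is that the ``tree-rotation'' you guess at is exactly how the paper closes the argument, and it needs only one rotation, not a leaf-at-a-time process. In the paper's orientation ($n\ge m$, $\nu(n,m)$ the maximum count): given a top split $(n_1,m_1),(n_2,m_2)$ with, say, $n_1\ge m_1$, induction gives $\nu(n_1,m_1)=m_1\,\nu(n_1-1,m_1)$; hoisting that peeled leaf to the root and using the possibly suboptimal split $(n_1-1,m_1),(n_2,m_2)$ of $(n-1,m)$ yields $m\,\nu(n-1,m)\ \ge\ m\bigl[(n_1-1)m_2+n_2m_1\bigr]\nu(n_1-1,m_1)\,\nu(n_2,m_2)$, so everything reduces to the scalar inequality $m_1(n_1m_2+n_2m_1)\le m\bigl[(n_1-1)m_2+n_2m_1\bigr]$ with $m=m_1+m_2$, i.e.\ to $(n_1-1)m_2^2+(n_2-1)m_1m_2\ge 0$. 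This is immediate when $n_1,n_2\ge 1$, and in the boundary case $n_2=0$ connectivity forces $m_2=1$, where the expression equals $n-m\ge 0$; note this is exactly where the side-ordering and the admissibility constraint you recorded (no all-one-side child of size at least $2$) are used. So your plan is the right one and is fillable, but as written the proof of the upper bound is not there.
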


Particularly interesting here, to us, is the stark difference between this case and the case of $K_n$ in terms of the proof mechanics: in particular, the lower bound -- difficult in $K_n$ -- is now the trivial bound, while the upper bound -- quite easy in the $K_n$ case -- is comparatively more difficult.

Finally, we consider some related problems: What happens if we work with more general graphs and/or more general colorings?  Here, we are able to characterize graphs with JL-colorings for which the trivial lower bound from $(ii)$ is tight and we prove an analogue of the matrix tree theorem counting rainbow spanning trees in general graphs that may be of interest in future investigations along these lines for non-JL colorings (cf. Theorem \ref{MTT}). 

The remainder of the paper is organized as follows: In the next section we introduce a particularly nice way of thinking about JL-colorings which allows us to derive our bounds.  We then turn our attention to the proof of Theorem \ref{tmain} in Section \ref{RST_Kn} before proving Theorem \ref{tbip} in Section \ref{RST_Knm}. We conclude with results concerning general graphs and colorings, some open questions, and directions for future work.

\section{The Structure of JL-Colorings} \label{structure}

Recall that a JL-coloring $\varphi$ is a rainbow cycle free $(n-1)$-edge-coloring of a graph $G$ with order $n$. There is a representation of JL-colorings as labeled binary trees which we will use to count RST's in a given JL-colored graph.  The key to this approach, which has appeared in a series of papers of Johnson and collaborators (see \cite{ghnp,hhjo, jz,jo}), is the following proposition:
\begin{proposition} 
	Suppose $\varphi$ is a JL-coloring of a connected graph $G$.  Then there is a  partition of $V(G)$ into sets $A$ and $\bar{A}$, so that $e(A,\bar{A})$, the set of edges between vertices in $A$ and $\bar{A}$, is monochromatic in $\varphi$, and both $\varphi|_{A}$ and $\varphi|_{\bar{A}}$ are JL-colorings of the graphs induced on $A$ and $\bar{A}$ respectively.   \label{prop1} 
\end{proposition}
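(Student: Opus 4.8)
The plan is to reduce the statement to the existence of a single color whose deletion disconnects $G$, and then to build the required partition from the resulting components. The starting point is the observation already made in the text: since $\varphi$ is rainbow cycle free and uses exactly $n-1$ colors on the connected graph $G$, selecting one edge of each color always yields a rainbow spanning tree. I would fix one such tree $T=\{e_1,\dots,e_{n-1}\}$ with $\varphi(e_i)=i$. Because \emph{every} transversal of the color classes is a spanning tree, for each color $c$ the two sides of $T-e_c$ define a partition $(X_c,Y_c)$ across which every color-$c$ edge must run (otherwise $T-e_c$ together with such an edge would fail to be a tree). Hence the cut $(X_c,Y_c)$ is monochromatic exactly when no edge of another color crosses it, which in turn happens if and only if the graph obtained from $G$ by deleting all color-$c$ edges is disconnected; conversely, \emph{any} monochromatic cut forces its crossing color to be disconnecting. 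So the entire proposition rests on the following claim: some color class $\mathcal{C}_c$ is disconnecting.

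Granting the claim, I would finish as follows. Fix a disconnecting color $c$ and let $D_1,\dots,D_m$ (with $m\ge 2$) be the components of $G$ after deleting the color-$c$ edges. Every edge joining distinct $D_i$'s must have color $c$, since any non-$c$ edge survives the deletion and thus lies inside a single component. I would then form the connected auxiliary graph $\Gamma$ on $\{D_1,\dots,D_m\}$ whose edges are the color-$c$ edges, take a spanning tree of $\Gamma$, and delete one of its edges to split the $D_i$ into two nonempty groups, each inducing a connected subgraph of $\Gamma$; let $A$ and $\bar A$ be the corresponding unions of components. Then $G[A]$ and $G[\bar A]$ are connected, every edge of $e(A,\bar A)$ joins distinct components and is therefore color $c$ (monochromatic), and rainbow-cycle-freeness is inherited by both induced subgraphs.

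The color counts then come for free. Each of $G[A]$, $G[\bar A]$ is connected and rainbow cycle free, so a transversal of its colors is a forest, whence $\varphi|_A$ uses at most $|A|-1$ colors and $\varphi|_{\bar A}$ at most $|\bar A|-1$. Since every non-$c$ color appears inside some $D_i$, and hence inside $A$ or inside $\bar A$, the $n-1$ colors are covered by the colors of $A$, those of $\bar A$, and $\{c\}$, giving
\[
n-1 \;\le\; |\mathrm{col}(A)| + |\mathrm{col}(\bar A)| + 1 \;\le\; (|A|-1)+(|\bar A|-1)+1 \;=\; n-1 .
\]
Equality throughout forces $\varphi|_A$ to use exactly $|A|-1$ colors and $\varphi|_{\bar A}$ exactly $|\bar A|-1$; being surjective onto that many colors and rainbow cycle free, each is a JL-coloring of the corresponding induced graph.

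The real work, and the step I expect to be the main obstacle, is the claim that some color is disconnecting. My approach would be by contradiction: assume no color is, so by the equivalence above every fundamental cut $(X_c,Y_c)$ is crossed by an edge of some color $d\neq c$. For each non-tree edge $g$, record the unique tree edge $e(g)$ on its fundamental cycle with $\varphi(e(g))=\varphi(g)$; it exists and is unique because $T$ is rainbow and $\varphi$ has no rainbow cycle. The assumption says each tree edge $e_c$ occurs as a \emph{non}-matched edge on some fundamental cycle, which yields a fixed-point-free map $e_c\mapsto e(g)$ on the $n-1$ tree edges and hence a cyclic chain $e_{c_1}\to\cdots\to e_{c_k}\to e_{c_1}$; the associated non-tree edges $g_1,\dots,g_k$ have pairwise distinct colors, and consecutive fundamental cycles share a tree edge. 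Splicing these cycles along their shared tree edges around the chain should produce a closed walk that reduces to an honest rainbow cycle in $G$, contradicting rainbow-cycle-freeness. Controlling the splicing—guaranteeing that the colors surviving the cancellations remain distinct and that one extracts a single cycle rather than an uncontrolled union of cycles—is the delicate heart of the matter; alternatively, one may invoke the binary-tree/ultrametric representation of JL-colorings from \cite{ghnp,hhjo,jz,jo}, whose top split produces exactly the partition sought.
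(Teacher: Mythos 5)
Your reduction is correct and cleanly executed, but it stops exactly at the hard part, and what you leave open is not a finishing detail---it is the entire content of the proposition. To give you credit where due: the equivalence between monochromatic cuts and disconnecting color classes, the construction of $(A,\bar{A})$ from the components of $G$ minus a disconnecting color class, and the counting argument showing both restrictions are JL-colorings are all correct (in fact, since $T-e_c$ already spans both sides of the cut, deleting a disconnecting color class leaves exactly two components, so your auxiliary graph $\Gamma$ is unnecessary). What is missing is any proof of the claim that some color class disconnects $G$. Your proposed argument---the fixed-point-free map $e_c \mapsto e(g_c)$, the cyclic chain, and the splicing of fundamental cycles---is set up correctly as far as it goes, but the step you defer (``should produce \dots is the delicate heart of the matter'') genuinely fails as stated: the symmetric difference of the cycles $C_1,\dots,C_k$ need not be a single cycle; tree edges (including the $e_{c_i}$ themselves) can lie on several of the $C_j$, so which of them survive is uncontrolled; and a surviving $e_{c_i}$ has the same color as $g_{i-1}$, which always survives, so even a single extracted cycle need not be rainbow. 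Acknowledging this difficulty does not discharge it, and no choice of chain obviously repairs it.

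You should also be aware that the paper itself does not prove Proposition \ref{prop1}: it quotes it from the literature, where it was proved for complete graphs in \cite{ghnp}, for complete bipartite graphs in \cite{jz}, for complete multipartite graphs in \cite{jo}, and for arbitrary connected graphs in \cite{hhjo}. The claim you left unproven (existence of a monochromatic, disconnecting color class) is precisely the main theorem of \cite{hhjo}, not a lemma one can dispatch in a paragraph. Moreover, your fallback of invoking ``the binary-tree representation of JL-colorings'' is circular in the context of this paper, since that representation is obtained by iterating Proposition \ref{prop1} itself. So the honest options are: cite \cite{hhjo} for the proposition (as the authors do), in which case your reduction is superfluous; or prove the claim, in which case the essential work remains to be done.
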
 

\noindent We note here that JL-colorings of graphs only exist for connected graphs so this proposition implies that both $A$ and $\bar{A}$ are connected and that the cut contains at least one edge, but further that the cut is an entire color class and that $\varphi|_{A}$ and $\varphi|_{\bar{A}}$ have disjoint color sets.


This was originally proved for complete graphs in {\cite{ghnp}}, complete bipartite graphs in {\cite{jz}}, and finally for complete multipartite graphs in {\cite{jo}}.  Recently, it has been established for arbitrary JL-colored graphs in {\cite{hhjo}}. Iterating Proposition \ref{prop1} on the induced subgraphs gives iteratively nested subsets so that each non-trivial subset $A$ is partitioned into two subsets $A'$ and $\bar{A}'$ where the edges between the subsets are monochromatic and the coloring induced on each is a JL-coloring.  

This allows us to create a rooted binary tree with $n-1$ internal vertices from every JL-coloring.  Here, each vertex is labeled with sets: the root is labeled with $V(G)$ and the children of a vertex labeled $A$ are the two JL-colored subsets $A'$ and $\bar{A}'$ guaranteed by Proposition \ref{prop1}.

It is easy to see that this construction of a tree from a JL-coloring actually gives a correspondence between JL-colorings of a graph and subgraph-labeled binary trees with $n-1$ internal vertices, where each subgraph is connected and the label of any internal vertex is partitioned by the labels of its two children. The colors of the corresponding JL-coloring can be associated with the internal vertices so that the edges of a color are exactly the edges between the two children of the associated internal vertex.

These representations can be simplified for the two main graph classes considered in this paper: complete graphs and complete bipartite graphs, and we do so below.  	

\subsection{JL-Colorings of $K_n$} 
\label{jlkn}

For the case where $G=K_{n}$, the exact sets labeling the vertices in the associated tree make no difference when enumerating rainbow spanning trees: only the number of vertices in each label matters. Thus, a JL-coloring of $K_n$ is equivalent (up to vertex labeling) to a rooted binary tree with $n-1$ internal vertices, so that the root is labeled by $n$ and the two children of a vertex labeled $r \geq 2$ are labeled $p$ and $q$ with $r=p+q$, $p, q \geq1$, and all $n$ leaves are labeled $1$. We call such a tree a {\it JL-tree}. Equivalently,  a JL-tree is a rooted binary tree in which every vertex is labeled with the number of leaves below (or including) itself. Further, there is a bijectin between JL-trees and JL-colored $K_n$'s.

As a clarifying example, we illustrate a JL-coloring and its respective JL-tree for $K_5$.

\begin{figure}[h!] 
	\begin{center}
		\centering
		\def\svgwidth{12cm}
		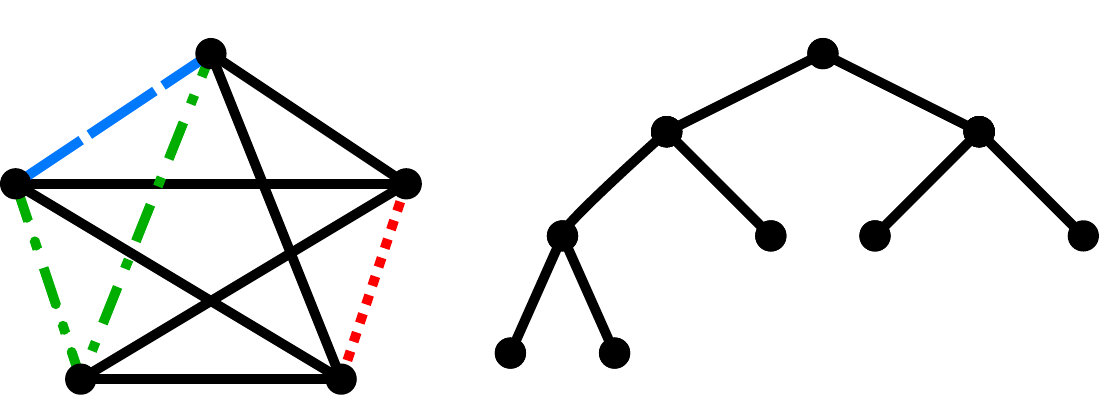
		
		\caption{A JL-coloring of $K_5$ and its associated JL-tree.  } 
	\end{center}
\end{figure}

\subsection{JL-Colorings of $K_{n,m}$}
\label{jlknm}

If $G=K_{n,m}$, then the trees described above can also be simplified.  In this case, the connected subgraphs $A$ and $\bar{A}$ are smaller complete bipartite graphs. The tree is determined by the size of each bipartite label and from which part in the parent label each smaller part originates.

\begin{figure}[h!] 
	
	\centering
	\begin{center}
		\def\svgwidth{11cm}
		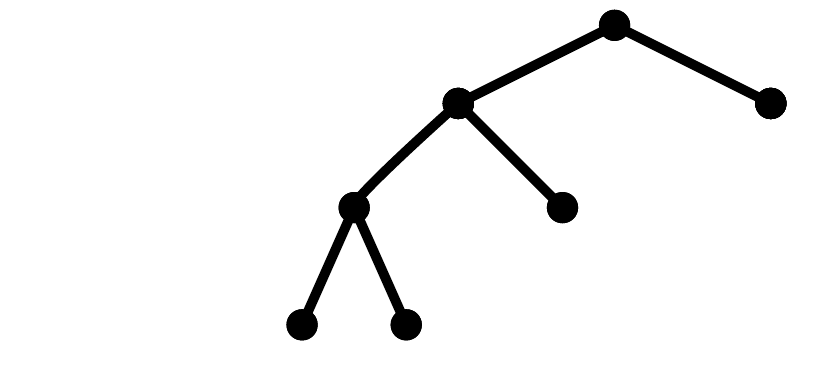
		\caption{A JL-coloring of $K_{2,3}$ and its associated $JL_b$-tree.} 
	\end{center} \label{f2} 
\end{figure}

  In light of this, a JL-coloring of $K_{n,m}$ is equivalent to a rooted full binary tree with $n+m-1$ internal vertices so that the root is labeled $(n,m)$ and the children of a vertex labeled $(r_1,r_2)$ are labeled $(p_1,p_2)$ and $(q_1,q_2)$ so that $p_1 + q_1 = r_1$, $p_2+q_2=r_2$ with the $p_i$, $q_i$ non-negative and so that if $p_1=0$ then $p_2 = 1$ (respectively if $p_2=0$, then $q_1=1$).  This last restriction is because a single vertex -- $K_{1,0}$ -- is connected, but $K_{2,0}$ is not.  Note that the vertices labeled $(1,0)$ or $(0,1)$ are exactly the leaves of the tree.  We call such a tree a {\it $JL_b$-tree} and again observe that there is a bijection between $JL_b$-trees and JL-colored $K_{m,n}$'s.

  An example of a $JL_b$-tree for $K_{2,3}$ is given in Figure 2.

\section{Rainbow spanning trees in $K_n$} \label{RST_Kn}

We begin by considering the case where the graph $G$ is complete.  In this instance, we observe that the JL-tree (introduced above in Section \ref{jlkn}) captures not only the structure of the JL-coloring, but also the number of rainbow spanning trees in the coloring.  

Since the graph is complete, the number of edges with a given color associated with an inner vertex $r$ is the product of the sizes of its two children, $p$ and $q$.  It follows that multiplying the sizes of all color classes together in a JL-coloring of $K_n$ (as in $(i))$ is equivalent to taking the product of all non-root labels of its associated JL-tree (or, equivalently, finding the product of all labels of the associated tree and dividing by $n$).  

\subsection{The Upper Bound}

We first turn our attention to the upper bound in Theorem \ref{tmain}.  This turns out to be relatively simple after the discussion above.  We prove that the JL-tree maximizing the product is the one where the two children of a vertex labeled $r$ are labeled $r-1$ and $1$.  

\begin{proof}[Proof of the upper bound in Theorem \ref{tmain}]
	We prove that the JL-tree maximizing the product is as described in the paragraph above: a tree where the vertex labeled $r$ has children labeled $r-1$ and $1$.  Such a tree has product $n!$ and hence, describes a coloring with $(n-1)!$ RSTs.  We proceed by induction on $n$, noting that it is trivially true for $n=1$. Now, suppose that in a maximizing tree, a vertex $r$ is split as $p$ and $q$ with $p, q \geq 1$: By the inductive hypothesis, the labels below the vertex labeled $r$ have product at most $p!q!$, but it is easy to see that $p!q! \leq (r-1)!$ if $p+q = r$ as this is equivalent to the statement that $\binom{r}{p} \geq r$ for $1 \leq p \leq r-1$.  Thus, the optimal split is $p=1$ and $q=r-1$, and the result follows. 
\end{proof} 
\subsection{The Lower Bound} 
We now turn to the significantly harder case of the lower bound.  Since the upper bound was obtained by taking the splits in the JL-tree to be as unbalanced as possible one might expect, or hope, that the lower bound would be achieved by taking the split to be as balanced as possible, namely a vertex labeled $n$ should split as $\lfloor \tfrac{n}{2}\rfloor$ and $\lceil \tfrac{n}{2} \rceil$. While this holds for powers of 2, it turns out to be false in general: one part of the optimal split is always a power of two, specifically the unique power of two between $\tfrac{n}{3}$ and $\tfrac{2n}{3}$.  To show this we study the following function.  

  For $n \in \mathbb{N}$, let 
\begin{equation}
\mu(n) = \min_{1 \leq p \leq n-1} n \cdot \mu(p) \cdot \mu(n-p), \label{eqn:mu}
\end{equation}
and let $\mu(1) =1$.  

This function corresponds to $n$ times the minimum number of RSTs.  This can be seen by noticing that if one takes an interior vertex of a JL-tree, as well as the vertices below it, one obtains a JL-tree for a smaller complete graph.  Thus, $\mu(n)$ is taking the product of all of the labels of the vertices of our `minimum' JL-tree recursively.

In light of this, we are interested in proving the following theorem, which is the lower bound of Theorem \ref{tmain}:
\begin{theorem} 
	Let $s$ denote the unique power of 2 so that $\frac{n}{3} \leq s < \frac{2n}{3}$.  Then, 
	\[
	\mu(n) = n\mu(s) \mu(n - s).  
	\]    \label{t1}
\end{theorem}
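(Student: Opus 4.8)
The plan is to prove this by strong induction on $n$, showing that for every $n$ the minimizing split $p$ in \eqref{eqn:mu} is realized by the unique power of two $s$ with $\frac{n}{3} \leq s < \frac{2n}{3}$. The base cases $n=1,2$ are immediate. For the inductive step, I would assume Theorem \ref{t1} holds for all integers smaller than $n$, so that $\mu(k)$ is determined recursively for all $k < n$, and then analyze the one-step optimization $\min_{1 \leq p \leq n-1} \mu(p)\mu(n-p)$ (the common factor of $n$ is irrelevant to locating the minimizer).

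The main structural idea I would exploit is a closed-form or at least a clean recursive description of $\mu$ obtained from the inductive hypothesis. Writing every $n$ in terms of its position relative to powers of two, the inductive hypothesis peels off the power of two $s$ between $n/3$ and $2n/3$ at each level, so $\mu(n)$ factors as a product over a binary-tree recursion in which one child is always a power of two. A convenient first step is to establish that for a power of two $2^k$, the balanced split is optimal and gives $\mu(2^k) = \prod_{j} (2^j)^{(\text{count})}$, i.e. a clean product formula; this anchors the recursion since powers of two are exactly the values where $s = n/2$. I would then define $f(p) = \mu(p)\mu(n-p)$ and study its behavior as $p$ ranges over $[1,n-1]$, aiming to show $f$ is minimized at $p = s$.

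The crux is comparing $f(s)$ against $f(p)$ for competing splits $p$, and here I expect the hard part to be controlling the non-monotone, "chaotic" behavior of $\mu$ that the authors advertise: $\mu$ is not convex or monotone in any naive sense, so a simple convexity or rearrangement argument will not suffice. I anticipate the key technical lemma to be a submultiplicativity/supermultiplicativity estimate comparing $\mu(a)\mu(b)$ for different factorizations $a+b = n$, likely proved by inducting on how $a$ and $b$ each decompose relative to their own dyadic landmarks. In particular I would want to show that moving the split point toward the dyadic landmark $s$ never increases the product, by a local exchange argument: if $p \neq s$, I would compare the recursion trees for splits $p$ and $s$ and argue that the tree using $s$ "wins" because it immediately introduces the factor $s$ that a $p$-split would otherwise be forced to introduce at a lower level but against a less favorable complementary part. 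Making this exchange argument rigorous — accounting for the interaction between the two subtrees and verifying the inequality $\mu(s)\mu(n-s) \leq \mu(p)\mu(n-p)$ in all ranges of $p$ — is where the real work lies.

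To organize the case analysis, I would split on the location of $p$ relative to $s$ and to the neighboring powers of two $s/2$ and $2s$: the cases $p < s/2$, $s/2 \leq p < s$, and $s \leq p$ (using the symmetry $p \leftrightarrow n-p$ to halve the work). In each range I would invoke the inductive hypothesis to rewrite $\mu(p)$ and $\mu(n-p)$ via their own landmark splits and reduce the desired inequality to one about smaller instances or to an explicit comparison of dyadic products. I expect the boundary interval $\frac{n}{3} \leq s < \frac{2n}{3}$ to fall out precisely because that is the range in which a single power of two can serve as one part while leaving the complementary part $n-s$ in a regime where its own landmark is consistent with the recursion; values of $s$ outside this interval would either be too small (forcing repeated inefficient splits) or too large (making $n-s$ pay a penalty), and quantifying this trade-off via the product formula for dyadic $\mu$ is the final step that pins down $s$ uniquely.
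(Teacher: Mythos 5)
Your plan has the right outer shell (strong induction, powers of two as anchors, case analysis on where $p$ sits relative to $s$), but the two ingredients you defer are exactly the hard content, and one of them, as stated, is false. The claim that ``moving the split point toward the dyadic landmark $s$ never increases the product'' is a pointwise monotonicity statement about $f(p) = \mu(p)\mu(n-p)$ which the paper never proves and which fails: writing $\mu(k) = \tau(k)\beta(k)$ with $\tau(k) = 2^{2k-2}/k$ as in \eqref{eqn:tau} and \eqref{dbeta}, one has $f(p) \propto \beta(p)\beta(n-p)/\bigl(p(n-p)\bigr)$, and $\beta$ has sharp local minima at powers of two ($\beta(2^i)=1$, while $\beta(m) \geq \tfrac98$ for $m$ not a power of two, cf.\ Proposition \ref{pfin}). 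Consequently $f$ itself has local minima at powers of two strictly below $s$: for the paper's running example $n=187$ (where $s=64$), a short computation unwinding Theorem \ref{t1} gives $f(32) < f(33)$, so stepping from $p=32$ to $p=33$ --- toward $s$ --- strictly \emph{increases} the product. Any exchange argument that requires each local move toward $s$ to be non-increasing therefore breaks down. Likewise, your proposed anchor --- that $2^k$ splits evenly, giving a clean formula for $\mu(2^k)$ --- cannot simply be ``established as a first step'': to show the balanced split of $2^k$ beats every unbalanced split is an instance of the same global comparison problem you are trying to solve, so asserting it is circular rather than a base for the induction.

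The missing idea, which is the heart of the paper's proof, is to normalize the chaos away before comparing anything: introduce the continuous minimizer $\tau(x) = 2^{2x-2}/x$, prove $\mu \geq \tau$ from the log-convexity of $\tau$ (Claim 1), deduce $\mu(2^i) = \tau(2^i)$ by a squeeze (Claim 2), and then work exclusively with $\beta = \mu/\tau \geq 1$ via the identity \eqref{eq:beta}. This converts ``which $p$ minimizes $\mu(p)\mu(n-p)$'' into ``which $p$ minimizes $\frac{n^2}{4p(n-p)}\beta(p)\beta(n-p)$,'' where every $\beta$ factor is at least $1$ and equals $1$ exactly at powers of two; this is simultaneously the explanation of why powers of two win and the device that makes cross-split comparisons tractable. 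The paper then proves a concrete rebalancing lemma (Lemma \ref{lcomparing}), bounding $\beta(p,q)$ from below by an explicit ratio times $\beta(p',q')$, where $(p',q')$ is assembled from the inductively known power-of-two parts of the optimal splits of $p$ and $q$, and finishes each case (split more balanced, or less balanced, than $(s,n-s)$) with one or two applications of that lemma, checking that the accumulated ratio exceeds $1$. Note that it jumps directly from the arbitrary split to $(s,n-s)$, possibly through one intermediate split, and never relies on step-by-step monotonicity. To complete your proposal you would need to supply objects playing the roles of $\tau$, $\beta$, and Lemma \ref{lcomparing}; as written, the central inequality is assumed where it needs to be proved.
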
 
\noindent{\bf Remark:} This does not quite finish the stated bound in Theorem \ref{tmain} that
\[
\frac{\mu(n)}{n} = 2^{2n-O(\log n)};
\]
the final step in this equality is recorded in Proposition \ref{pfin} at the conclusion of this section.

In order to prove Theorem \ref{t1}, we first introduce the following continuous analogue of $\mu$.  For $x \geq 1$, consider the function
\begin{equation}
\tau(x) = \frac{2^{2x - 2}}{x}. \label{eqn:tau}
\end{equation} 

It is not necessarily obvious that $\tau$ is, in any sense, a continuous analogue of $\mu$.  To this end, note that
\[
\log_2 \tau(x) = 2x - \log_2(x) - 2 
\]
is a convex function of $x$.  This log convexity means that for $x \geq 2$,
\begin{align*}
\min_{1 \leq p \leq x-1} x \cdot\tau(p)\cdot\tau(x-p) &= x \tau(x/2)^2 \\&= x \frac{2^{2x - 4}}{(x/2)^2} \\
&= \frac{2^{2x-2}}{x} = \tau(x),
\end{align*}
so that $\tau(x)$ satisfies the defining property $\eqref{eqn:mu}$ of $\mu$ while extending the minimization to all real numbers as opposed to merely integers, and $\tau(1)=\mu(1)=1$.  We now make some elementary observations.

\noindent {\bf Claim 1:} For all integers $n \geq 1$, $\mu(n) \geq \tau(n)$.
\begin{proof}
	
	To see this, proceed by induction.  Equality holds if $n=1$, and for $n \geq 2$ note that for some $1 \leq p \leq n-1$,
	\[
	\mu(n) = n \cdot \mu(p)\cdot \mu(n-p) \geq n \cdot \tau(p)\cdot \tau(n-p) \geq n \cdot  \tau(n/2) \cdot \tau(n/2) = \tau(n).  
	\]
\end{proof}

\noindent{\bf Claim 2:} For all integers $i \geq 0$, $\mu(2^{i}) = \tau(2^{i})$.
\begin{proof} 
	This is also shown by induction. Equality holds for $i = 0$ and for $i \geq 1$ observe that,
	\[\mu(2^{i}) \geq \tau(2^{i}) = 2^{i}\tau(2^{i-1})^2 = 2^i\mu(2^{i-1})^2 \geq \mu(2^i).
	\]

	Here, the first inequality is Claim $1$ and the final inequality is from the definition of $\mu$ \eqref{eqn:mu}. Combined, the inequalities force equality and complete the inductive step.  
\end{proof}

We remark here that Claims 1 and 2, in fact, prove the lower bound in Theorem \ref{tmain} is achieved since $\frac{\mu(n)}{n} \geq \frac{\tau(n)}{n} = 2^{2n-2\log_2 n-2}$ and $\mu(n) = \tau(n)$ when $n$ is a power of 2. It  remains to show that $\mu(n)$ is always of the order $2^{2n-O(\log n)}$.  This is done in Proposition 2 at the end of this section.


Ultimately, we are interested in the relationship between $\mu(n)$ and $\tau(n)$; to this end let
\begin{equation}
\beta(n) = \frac{\mu(n)}{\tau(n)}. \label{dbeta}
\end{equation}

To get a sense of values of $\beta, \mu$, and $\tau$, we include some values of them below:

\[\begin{array}{|c||c|c|c|c|c|c|c|c|c|c|c|c|c|c|}
\hline 
n= & 2 & 3 & 4& 5&6&7&8&9&10&11&12 & 13 \\\hline \hline
\mu(n) & 2 & 6 & 16 & 60 & 192 &672& 2048& 8640& 30720 & 118272 &393216 & 1597440  \\
\hline
\tau(n) & 2 & 5\frac{1}{3} & 16 & 51\frac{1}{5}&170\frac{2}{3}&585\frac{1}{7}&2048&7281\frac{7}{9} & 26214\frac{2}{5} & 95325\frac{1}{11} & 349525\frac{1}{3} & 129055\frac{1}{13}\\
\hline
\beta(n) & 1 & \frac{9}{8} & 1 & \frac{75}{64} & \frac{9}{8} & \frac{147}{128} &1& \frac{1215}{1024} & \frac{75}{64} & \frac{2541}{2048} & \frac{9}{8} & \frac{2535}{2048}\\
\hline
\end{array} 
\]
 
By Claim 1, we know that $\beta(n) \geq 1$ for all $n$.  A straightforward calculation reveals that if $\mu(n) = n \mu(p)\mu(n-p)$, then 

\begin{equation}
\beta(n) = \frac{\mu(n)}{\tau(n)} = \frac{n^2 \cdot \mu(p)\mu(n-p)}{2^{2n-2}} = \frac{n^2}{4p(n-p)}\beta(p)\beta(n-p), \label{eq:beta}
\end{equation}
and finding the minimizing split that defines $\mu(n)$ is equivalent to finding the value of $p$ that minimizes \eqref{eq:beta}. To that end, we now proceed with the proof of Theorem \ref{t1} which will show that if $s$ is the unique power of 2 so that $\frac{n}{3} \leq s < \frac{2n}{3}$, then $s$ is the value of $p$ that minimizes \eqref{eq:beta}.  

We remark here that working with $\beta$ proves to be a bit simpler than dealing with $\mu$ directly, as we at least have some information (and some clue as to why the powers of two occur): $\beta(n) \geq 1$ with $\beta(2^{n}) = 1$, so minimizing the product \eqref{eq:beta} `prefers' powers of two.  Unfortunately, this is not enough to complete the proof as the $\beta$ function is quite chaotic and $\limsup \beta(n) = \infty$, and there is no immediate reason that the product of two numbers larger than one may not be smaller than the product of one and a rather larger number. (Note that this is not a priori clear from the definition. It follows, however, rather easily from the expression \eqref{eq:beta} and Theorem \ref{t1} by taking an appropriate subsequence.) As an illustration, we present in Figure 3 the values of $\beta(n)$ for $1 \leq n \leq 256$.  

\begin{figure}[h!]
\label{bfig}
\begin{tikzpicture}
\begin{axis}[
width=\textwidth,
height=2.5in,
xmin=0,
xmax=257,
ymin=1
]
\addplot[black,mark=*,mark size={.3mm}] table {data3.txt};
,\end{axis}
\end{tikzpicture}
\caption{A plot of $\beta(n)$ for $1 \leq n \leq 256$ (linearly interpolating between points).} 
\end{figure}
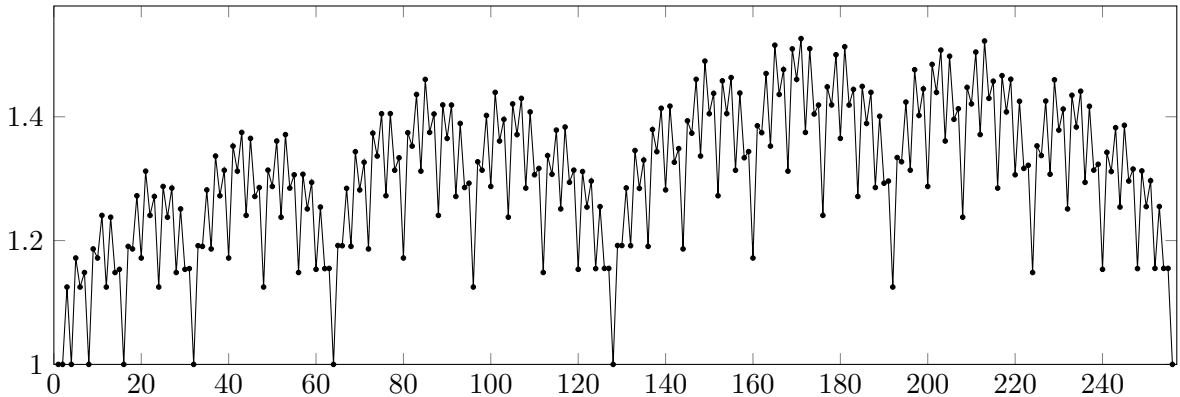

Several striking features of $\beta(n)$ appear in this picture: for instance it appears that $\beta(n)$ has some self-similarity properties, and alternates between increasing and decreasing.  Both of these turn out to be true: it is not difficult to verify that $\beta(2n) = \beta(n)$, and a more involved argument shows that $\beta(n)$ for even $n$ is smaller than $\beta(n-1)$ and $\beta(n+1)$.  These facts, however, turn out to be not important for solving the recurrence, so we do not record their (rather laborious, in the second case) proofs here.

\begin{proof}[Proof of Theorem \ref{t1}]
	We proceed by induction.  Theorem $\ref{t1}$ holds for $n=1$ so let us assume that it holds for all integers $k < n$. We shall prove that it holds for $n$.   
	
	For any positive integers $p, q$ with $p+q = n$, let 
	\[
	\beta(p,q) = \frac{n^2}{4\cdot p \cdot q}  \beta(p)\beta(q).
	\]
	We want to show that $\beta(p,q) \geq \beta(s,n-s)$ where $s$ is unique power of 2 with $\frac{n}{3} \leq s < \frac{2n}{3}$.  
	
	We first prove the following lemma:
	
	\begin{lemma} \label{lcomparing}
		Let $p, q$ be positive integers with $p+q = n$ and $p \leq q$.  Let $(p_1, p_2)$ and $(q_1, q_2)$ be optimal minimizing splits of $p$ and $q$ respectively, with the two numbers in the splits ordered arbitrarily.  Then:	
		\begin{enumerate}
			\item[(a)] \[
			\beta(p,q) \geq \frac{pq}{(p_1+q_1)(p_2+q_2)} \beta(p_1+q_1,p_2+q_2) 
			\]
			\item[(b)] \[
			\beta(p,q) \geq \frac{q}{p+q_1} \beta(p+q_1,q_2) 
			\]
		\end{enumerate}
	\end{lemma}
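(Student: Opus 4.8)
The plan is to reduce both inequalities to a single elementary fact about $\beta$ and then verify each by direct substitution. The fact I would isolate first is the \emph{sub-optimality bound}: for any positive integers $a,b$, the split $(a,b)$ is one of the candidates in the minimization defining $\mu(a+b)$, so by the definition \eqref{eqn:mu} of $\mu$ together with the identity \eqref{eq:beta} we have
\[
\beta(a+b) \leq \beta(a,b) = \frac{(a+b)^2}{4ab}\,\beta(a)\beta(b),
\]
with equality precisely when $(a,b)$ is an optimal split of $a+b$. The equality case is exactly what lets me expand a $\beta$-value along its optimal split, while the inequality case is what lets me recombine two parts into a larger one, and crucially the inequality runs in the favorable direction for a lower bound on $\beta(p,q)$.

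For part (a), I would first expand $\beta(p,q) = \tfrac{n^2}{4pq}\beta(p)\beta(q)$ by substituting the equality forms $\beta(p) = \tfrac{p^2}{4p_1p_2}\beta(p_1)\beta(p_2)$ and $\beta(q) = \tfrac{q^2}{4q_1q_2}\beta(q_1)\beta(q_2)$, valid because $(p_1,p_2)$ and $(q_1,q_2)$ are optimal splits. This collapses to the single symmetric product
\[
\beta(p,q) = \frac{n^2\,pq}{64\,p_1p_2q_1q_2}\,\beta(p_1)\beta(p_2)\beta(q_1)\beta(q_2).
\]
I would then rewrite the right-hand side of (a), using $(p_1+q_1)+(p_2+q_2)=n$, as $\tfrac{n^2 pq}{4(p_1+q_1)^2(p_2+q_2)^2}\beta(p_1+q_1)\beta(p_2+q_2)$, and bound the two factors $\beta(p_1+q_1)$ and $\beta(p_2+q_2)$ from above by the sub-optimality bound applied to the splits $(p_1,q_1)$ and $(p_2,q_2)$. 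The squared factors $(p_1+q_1)^2$ and $(p_2+q_2)^2$ cancel exactly, and the remaining product matches the expanded form of $\beta(p,q)$ term for term; hence the right-hand side of (a) is at most $\beta(p,q)$, which is the claim.

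For part (b) the argument is the same but one level shallower: I would expand only $\beta(q)$ along its optimal split, leaving $\beta(p)$ untouched, to obtain $\beta(p,q) = \tfrac{n^2 q}{16\,p\,q_1q_2}\beta(p)\beta(q_1)\beta(q_2)$. Writing the right-hand side of (b) as $\tfrac{n^2 q}{4(p+q_1)^2 q_2}\beta(p+q_1)\beta(q_2)$ and applying the sub-optimality bound to $\beta(p+q_1)$ via the split $(p,q_1)$ again cancels the $(p+q_1)^2$ factors and reproduces the expanded expression for $\beta(p,q)$ exactly, giving the inequality.

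I do not expect a genuine obstacle internal to the lemma: once the sub-optimality bound is in hand, both parts are bookkeeping, and the only design choice is the regrouping — pairing $p_1$ with $q_1$ and $p_2$ with $q_2$ in (a), and fusing $p$ with $q_1$ in (b). The single point to check carefully is that the $\beta$-factors released by the two expansions recombine into \emph{exactly} the factors produced by the sub-optimality bounds, with no slack beyond the one $\leq$; this clean collapse is a consequence of the multiplicative, scale-invariant shape of \eqref{eq:beta}. I would also note that the phrase ``ordered arbitrarily'' is harmless, since the regrouping and resulting inequality hold for either labeling of each split — and it is precisely this freedom of pairing, rather than anything in the proof of the lemma itself, that the main induction for Theorem \ref{t1} will have to exploit to force the power-of-two split.
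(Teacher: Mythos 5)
Your proposal is correct and takes essentially the same approach as the paper: both proofs expand $\beta(p)$ and $\beta(q)$ along their optimal splits as equalities, regroup the resulting factors, and apply the sub-optimality inequality $\beta(a+b) \leq \frac{(a+b)^2}{4ab}\,\beta(a)\beta(b)$ to recombine parts (pairing $p_1$ with $q_1$ and $p_2$ with $q_2$ for part (a), fusing $p$ with $q_1$ for part (b)). The only difference is presentational — you isolate the sub-optimality bound as a named fact and verify the inequality by matching expanded forms of both sides, whereas the paper writes a single chain of equalities from $\beta(p,q)$ with one inequality in the middle — so there is nothing of substance to flag.
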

	\begin{proof} 
		To prove $(a)$, we observe:
		\begin{align*}
		\beta(p,q) &= \frac{n^2}{4 \cdot p \cdot q} \beta(p)\beta(q) \\
		&= \frac{n^2 \cdot p \cdot q}{4^3 \cdot p_1 \cdot p_2 \cdot q_1 \cdot q_2} \beta(p_1)\beta(p_2)\beta(q_1)\beta(q_2) \\
		&=  \frac{n^2 \cdot p \cdot q (p_1+q_1)^2(p_2+q_2)^2}{4^3 \cdot p_1 \cdot p_2 \cdot q_1 \cdot q_2\cdot (p_1+q_1)^2(p_2+q_2)^2} \beta(p_1)\beta(p_2)\beta(q_1)\beta(q_2) \\
		&= \frac{n^2 \cdot p \cdot q}{4 \cdot (p_1 + q_1)^2 (p_2 + q_2)^2} \cdot \frac{(p_1 + q_1)^2}{4 \cdot p_1 \cdot q_1} \beta(p_1) \beta (q_1) \cdot \frac{(p_2+q_2)^2}{4 \cdot p_2 \cdot q_2} \beta (p_2) \beta (q_2) \\
		&\geq \frac{n^2 \cdot p \cdot q}{4 \cdot (p_1+q_1)^2(p_2+q_2)^2} \beta(p_1+q_1)\beta(p_2+q_2)\\
		&= \frac{p \cdot q}{(p_1+q_1)(p_2+q_2)}  \cdot \frac{n^2}{4 \cdot (p_1+q_1)(p_2+q_2)} \beta(p_1+q_1)\beta(p_2+q_2)\\&= \frac{p \cdot q}{(p_1+q_1)(p_2+q_2)}  \beta(p_1+q_1, p_2+q_2)
		\end{align*}
		
		where the inequality comes from the fact that $\beta(p_1 + q_1)$ and $\beta(p_2 + q_2)$ might be suboptimal splits for $p_1 + q_1$ and $p_2 + q_2$, respectively.  The proof of $(b)$ follows in the same manner, only splitting $q$ instead of both $q$ and $p$.	
		
	\end{proof}
We now proceed by comparing an arbitrary split $p+q=n$, $p \leq q$, to our conjectured optimal split, $s + t = n$, where $s$ is the unique power of 2 so that $\frac{n}{3} \leq s < \frac{2n}{3}$. (For the remainder of this section, let $s$ and $t$ be defined as such.) We consider two cases: the first in which $(p,q)$ is a more balanced split than $(s,t)$ and Case 2 in which $(p,q)$ is less balanced. In both cases, we show $\beta(p,q) \geq \beta(s,t)$, thus proving the theorem.  To clarify the mechanics of this section, we give a brief example after the conclusion of the proof for the benefit of the reader.  
	
	\noindent{\bf Case 1 (More Balanced Split):} $\min(s,t) <  p \leq q < \max(s,t)$.
	
    Suppose that $\min(s,t) <  p \leq q < \max(s,t)$. Notice it follows that $pq > st$. Let $(p_1,p_2)$ and $(q_1,q_2)$ be the optimal splits of $p$ and $q$, respectively, and note that by induction, $p$ and $q$ split as conjectured; let $p_1$ and $q_1$ denote the powers of 2, respectively.  
    
    Note that since $p \leq q$ that $p_1 \leq q_1$.If $s \leq p \leq q$, then both $p_1, q_1 \geq s/2$.    Since $q < t \leq 2s$, $q_1 < s$ as $\frac{2}{3}q 2s$.  Note that $\frac{(p+q)}{3} = \frac{n}{3} \leq s$.  If $q \geq \frac{3}{2}s$, then this implies that $2p/3 \leq s$.  This, in turn, implies that $p_1 = s/2$ as we already know that $p_1 \geq s/2$, and in this case $p_1=s/2$ and $q_1=s$.  Otherwise, since $2q/3 < \frac{3}{2}s$ $p_1 = q_1 = s/2$.
    
    Similarly, if $s > t$, we have that $p_1 \leq q_1 \leq s/2$ and since $p > t > s/2$ we have that $s/4 \leq p_1 \leq q_1$.  Now, note that $\frac{2(p+q)}{3} = \frac{n}{3} > s/2$ which implies that if $p/3 \leq s/4$, then $q/3 > s/4$ so that the options here are $p_1=q_1=s/2$ or $p_1=s/4$ and $q_1 = s/2$.

	\noindent{\it Balancing:} 
	
	If $p_1 = q_1 = s/2$ then we apply Lemma \ref{lcomparing} to see that
	\[
	\beta(p,q) \geq \frac{pq}{(p_1+q_1)(p_2+q_2)} \beta(p_1 + q_1,p_2+q_2) = \frac{pq}{st} \beta(s,t) > \beta(s,t).  
	\]
	Here we use the fact that $p_1 + q_1 = s$ so $p_2+q_2 = t$, and the earlier observation that $pq > st$.
	
	\noindent{\it Balancing then Unbalancing :}
	
	Now suppose that either $p_1 = s/2$ while $q_1 = s$ or $p_1=s/4$ while $q_1 = s/2$. We proceed by first comparing $(p,q)$ to an intermediary more balanced split $(p',q')$ such that $p' = p_1+q_1$,  $q' = p_2+q_2$ and $p < p' \leq q' < q$.  In each of these cases, after applying Lemma \ref{lcomparing}, we obtain that 
	
	\[
	\beta(p,q) \geq \frac{pq}{(p_1+q_1)(p_2+q_2)} \beta(p_1 + q_1,p_2+q_2) = \frac{pq}{p'q'} \beta(p',q').  
	\]
	From here we get that $p'$ splits optimally as $s/2 + p_2'$ for some $p_2'$, and $q'$ splits optimally as $s/2 + q_2'$ for some $q_2'$.  But now another application of Lemma \ref{lcomparing} shows that
	\[
	\beta(p,q) \geq \frac{pq}{p'q'} \beta(p',q') 
	\geq \frac{pq}{p'q'} \frac{p'q'}{(s/2 + s/2)(p_2' + q'_2)} \beta(s/2 + s/2,p_2'+q'_2) = \frac{pq}{st} \beta(s,t) > \beta(s,t).    
	\]

	\noindent{\bf Case 2 (Less Balanced Split):} $p < \min(s,t) \leq \max(s,t) < q$.
	
	This case works much like Case 1, with somewhat of an opposite feel since the split we are considering is less balanced than our conjectured optimal split. To that end, suppose that $p < \min(s,t) \leq \max(s,t) < q$. As above, let $(p_1,p_2)$ and $(q_1, q_2)$ be the optimal splits of $p$ and $q$, respectively, and note that by induction, $p$ and $q$ split as conjectured; let $p_1$ and $q_1$ denote the powers of 2, respectively. It follows that $q_1 = s$ or $s/2$ since $q > s$.

	
	\noindent{\it Balancing:}
	
	If $q_1=s$, this is quite easy.  We apply Lemma \ref{lcomparing} directly to see that
	\[
	\beta(p,q) \geq \frac{q}{p+q_2} \beta(p+q_2,q_1) = \frac{q}{t} \beta(t,s) > \beta(s,t)  
	\]
	where we use the fact that $q > \max(s,t)$ and also the fact that $n = s + t = q_1 + p+q_2$ implies $t = p+q_2$.  
	
	\noindent{\it Unbalancing then Balancing:} 
	
	If $q_1 = s/2$ we first note that
	\[
	\beta(p,q) \geq \frac{q}{p+q_2} \beta(q_1, p+q_2). 
	\]
	Now, consider the optimal split $r_1, r_2$ of $p+q_2 = n-s/2$.  Observe that $s < 2n/3$, so $s/2 < n/3$, and hence $p+q_2 > 2n/3 \geq \max(s,t)$.  Thus, this split is also less balanced than $s,t$ (and potentially less balanced than $p,q$ so that the ratio $\frac{q}{p+q_2}$ appearing above may be less than one.)  None the less, we proceed noting that since $n > p+q_1 \geq s$, either $r_1 = s$ or $r_1 = s/2$.  
    
    \noindent If $r_1 = s$, then we again apply Lemma 1 to see that:   
	\[
	\beta(p,q) \geq \frac{q}{p+q_2} \beta(q_1, p+q_2) \geq \frac{q}{p+q_2} \cdot \frac{p+q_2}{q_1 + r_2} \beta(s,t) = \frac{q}{t} \beta(s,t) > \beta(s,t).
	\] 
	Otherwise, if $r_1 = s/2$, then we balance slightly differently:
	\[
	\beta(p,q) \geq \frac{q}{p+q_2} \beta(q_1, p+q_2) \geq \frac{q}{p+q_2} \cdot \frac{p+q_2}{q_1 + r_1} \beta(q_1 + r_1, r_2) = \frac{q}{s} \beta(s,t) > \beta(s,t).  
	\]
	In both cases, we see that $\beta(p,q) \geq \beta(s,t)$, thus proving the theorem.
	
\end{proof}

\begin{exmp}{\rm
To better understand the mechanics of the proof above, it is rather helpful to work through an example.  To that end, consider the case $n=187$, whose optimal split is $s=64$ and $n-s=123$.  To show this, we want to compare $(64,123)$ to an arbitrary split of 187.  Suppose we start with a more balanced split: $(90,97)$.  To compare these splits, we first compare the $(90,97)$ split to the more balanced split $(91,96)$, and then ultimately to the (less balanced, but optimal) $(64,123)$ split. Figures $3$, $4$, and $5$ illustrate the optimal split for $n=187$, along with the splits we compare them to which, by induction, we know split optimally below the first step.  

\begin{figure}[H]
\begin{minipage}{0.33\linewidth}
\centering
\begin{forest}
[187 [64 [32][32]]
[123 [64][59]]]
\end{forest}
\caption{The optimal split}
\end{minipage}
\begin{minipage}{0.33\linewidth}
\centering
\begin{forest}
[187 [90 [32][58]]
[97 [33][64]]]
\end{forest}
\caption{The $(90,97)$ split}
\end{minipage}
\begin{minipage}{0.33\linewidth}
\centering
\begin{forest}
[187 [91 [32][59]]
[96 [32][64]]]
\end{forest} 
\caption{The $(91,96)$ split}
\end{minipage}
\end{figure}

\begin{align*}
\beta(90,97) &=\frac{187^2}{4\cdot 90 \cdot 97} \beta(90)\beta(97)\\
 &= \frac{187^2\cdot 90\cdot 97}{4^3\cdot 32 \cdot 58\cdot 33\cdot 64} \beta(32)\beta(58)\beta(33)\beta(64)\\
 &\geq \frac{187^2\cdot 90\cdot 97}{4\cdot 91^2 \cdot 96^2} \beta(91)\beta(96)\\
 &= \frac{90 \cdot 97}{91\cdot 96}\left[\frac{187^2}{4\cdot 91\cdot 96} \beta(91)\beta(96)\right] \\
 &= \frac{90 \cdot 97}{91\cdot 96}\left[\frac{187^2\cdot 91\cdot 96}{4^3\cdot 32^2\cdot 59\cdot 64} \beta(32)^2\beta(59)\beta(64)\right] \\
 &= \frac{90 \cdot 97}{64\cdot 123}\left[\frac{187^2}{4\cdot 64\cdot 123} \beta(64)\beta(123)\right] \\
& > \frac{187^2}{4\cdot 64\cdot 123} \beta(64)\beta(123) = \beta(64,123)
\end{align*}
Hence, the $(90,97)$ split of $n=187$ is not an optimal split since there exists a split of $(64,123)$ with smaller $\beta$.
}
\end{exmp}

We conclude this section with a final proposition, completing the claimed bound that $\frac{\mu(n)}{n} = 2^{2n - O(\log n)}$ from Theorem \ref{tmain} for all $n$.

\begin{proposition} \label{pfin}
$\beta(n) \leq  n^{O(1)}$ and $\beta(n) \geq \frac{9}{8}$
for all $n$ which are not powers of two.  

\end{proposition}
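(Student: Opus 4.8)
The plan is to base both bounds on a single elementary two-sided estimate for the multiplicative factor that appears in \eqref{eq:beta} when $p$ is taken to be the optimal split $s$ guaranteed by Theorem \ref{t1}. Since $s$ is a power of two, Claim 2 gives $\beta(s)=1$, so at the optimal split \eqref{eq:beta} collapses to
\[
\beta(n) = \frac{n^2}{4\,s\,(n-s)}\,\beta(n-s).
\]
Because $\tfrac n3 \le s < \tfrac{2n}{3}$, the product $s(n-s)$ is a downward parabola in $s$ on $[\tfrac n3,\tfrac{2n}{3}]$, with maximum $\tfrac{n^2}{4}$ at $s=\tfrac n2$ and minimum $\tfrac{2n^2}{9}$ at the endpoints. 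This gives
\[
1 \le \frac{n^2}{4\,s\,(n-s)} \le \frac98,
\]
with the left equality exactly when $n=2s$ (a power of two) and the right equality exactly when $n=3s$. This one estimate drives everything: the lower bound $\ge 1$ powers the lower bound on $\beta$, while the upper bound $\le \tfrac98$ powers the polynomial bound.

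For the lower bound I would argue by strong induction on $n$, proving $\beta(n)\ge\tfrac98$ whenever $n$ is not a power of two, with base case $\beta(3)=\tfrac98$. In the inductive step, write the optimal split as $(s,n-s)$. If $n-s$ is itself \emph{not} a power of two, then $1 \le n-s < n$ is a non-power-of-two, so the inductive hypothesis gives $\beta(n-s)\ge\tfrac98$; combined with $\tfrac{n^2}{4s(n-s)}\ge 1$ this yields $\beta(n)\ge\tfrac98$. The only remaining possibility is that $n-s$ is a power of two; since $s$ is a power of two lying in $[\tfrac n3,\tfrac{2n}{3})$, a short check on exponents forces $n-s\in\{s,2s\}$, and the case $n-s=s$ would make $n=2s$ a power of two (excluded), leaving $n=3s$. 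In that single case the factor equals exactly $\tfrac98$ and $\beta(n-s)=\beta(2s)=1$, so $\beta(n)=\tfrac98$ directly. (One may streamline this by first using $\beta(2n)=\beta(n)$ to reduce to odd $n$, for which $s$ is even and $n-s$ is odd, so $n-s$ is a power of two only if $n-s=1$, which is incompatible with $s\in[\tfrac n3,\tfrac{2n}{3})$ once $n\ge 5$.)

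For the upper bound, the estimate $\tfrac{n^2}{4s(n-s)}\le\tfrac98$ turns the collapsed recurrence into
\[
\beta(n) \le \tfrac98\,\beta(n-s), \qquad n-s \le \tfrac{2n}{3}.
\]
Iterating, the argument shrinks by a factor of at least $\tfrac23$ per step and the process terminates once it reaches a power of two, where $\beta=1$; hence the recursion has depth $O(\log n)$. Therefore $\beta(n)\le(\tfrac98)^{O(\log n)}=n^{O(1)}$. Finally, recalling $\mu(n)=\beta(n)\tau(n)=\beta(n)\,2^{2n-2}/n$, the bounds $1\le\beta(n)\le n^{O(1)}$ (the lower bound being Claim 1) give $\mu(n)/n=\beta(n)\,2^{2n-2}/n^2=2^{2n-O(\log n)}$, completing the estimate advertised in Theorem \ref{tmain}.

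I expect the only genuinely delicate point to be the bookkeeping in the lower bound: one must notice that the naive induction ``$\beta(n)\ge\beta(n-s)\ge\tfrac98$'' breaks down precisely when $n-s$ is a power of two, and then verify that this occurs only for $n=3s$, where the bound is met with equality rather than violated. Everything else reduces to the concavity of $s\mapsto s(n-s)$ and the logarithmic recursion depth, both of which are routine.
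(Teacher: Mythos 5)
Your proof is correct and follows essentially the same route as the paper: both collapse the recurrence to $\beta(n) = \frac{n^2}{4s(n-s)}\,\beta(n-s)$ via Theorem \ref{t1} and Claim 2, bound the factor between $1$ and $\frac{9}{8}$, iterate with depth $\log_{3/2} n$ for the polynomial upper bound, and run a strong induction keyed to the family $n = 3\cdot 2^i$ for the lower bound. If anything, your exponent check (that $s$ and $n-s$ both powers of two with $\frac{n}{3} \leq s < \frac{2n}{3}$ forces $n-s \in \{s, 2s\}$, hence $n = 3s$) supplies a detail that the paper's terser lower-bound argument leaves implicit.
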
 
\begin{proof} 
Both statements follow from \eqref{eq:beta} and Theorem \ref{t1} which together show
\[
\beta(n) = \frac{n^2}{4 s (n-s)} \beta(s)\beta(n-s),
\]
where $s$ is the unique power for two satisfying $\frac{n}{3} \leq s < \frac{2n}{3}.$  Since $\beta(s) = 1$, one obtains that
\[
\beta(n) \leq \frac{9}{8} \beta(n-s).
\]
Since $n-s \leq \frac{2}{3}n$, this gives that $\beta(n) \leq (9/8)^{\log_{3/2}(n)}$ and, thus, proves the first statement. 

The second statement follows by strong induction, noting that it is true for $n=2^{i} + 2^{i+1}$ from the fact that for such integers, $\frac{n^2}{4\cdot s\cdot(n-s)} = \frac{9}{8}$, and for other integers, at least one of the terms appearing in the decomposition of $\beta(n)$ is not a power of two and hence, is at least $\frac{9}{8}$.  
\end{proof} 

\noindent{\bf Remark:} The fact that $\beta(n) \leq n^{O(1)}$ completes the claimed bound on $\frac{\mu(n)}{n}$ from Theorem \ref{tmain} as
\[
\frac{\mu(n)}{n} = \beta(n) \frac{\tau(n)}{n} = \beta(n) 2^{2n - 2\log_2(n)} = 2^{2n-O(\log n)} .
\] 


\section{Rainbow Spanning Trees in $K_{n,m}$} \label{RST_Knm}

We now consider the case where the graph $G$ is complete bipartite. As with the JL-tree associated with a complete graph, the $JL_b$-tree (introduced in Section \ref{jlknm}) associated with a complete bipartite graph captures both the structure of the JL-coloring and the number of RSTs in that coloring.

In this instance, the number of edges with color $\mathcal{C}_1$ associated with an inner vertex $(p,q)$ and children $(p_1, p_2)$ and $(q_1,q_2)$ would be the sum $p_1q_2$ + $p_2q_1$.




Now, we turn our attention to the proof of Theorem \ref{tbip}. We begin by proving the lower bound, followed by the upper bound.
	
	\subsection{The Lower Bound}
	
We first consider the lower bound in Theorem \ref{tbip}. We prove that $|\mathcal{R}(K_{n,m},\varphi)| \geq (n-1)(m-1)+1$ and further, that there exists a coloring achieving this lower bound.

	
	\begin{proof}[Proof of the lower bound in Theorem \ref{tbip}]
		Let $G = K_{n,m}$ be a complete bipartite graph with partitions $N$ and $M$, respectively. By \eqref{convexity!}, for a graph $G$ of order $n$, $|\mathcal{R}(G,\varphi)| \geq |E(G)| - (n-2)$, so it follows that for $K_{n,m}$, $$  |\mathcal{R}(K_{n,m},\varphi)| \geq (n-1)(m-1) + 1.$$ 
We  construct a coloring achieving this bound as follows.  Fix one vertex $a \in N$ and $b \in M$ from each partite set.  Color the edges incident to $a$ and $b$ with distinct colors, and color all other edges the same as the $ab$ edge, so that all color classes except for one have size one.  This coloring has $n+m-1$ colors, it has $n+m-2$ color classes class of size one, and the remaining class has size $nm - (n+m-2) = (n-1)(m-1)+1$.  This coloring is also rainbow cycle free, as any cycle must use two edges of the $ab$ edge's color.  This realizes the bound of \eqref{convexity!} and proves the theorem.


		
	\end{proof}
    
Note that the coloring described above is represented by the $JL_b$-tree where the children of a vertex labeled $(a,b)$ are $(1,b-1)$ and $(a-1, 1)$, respectively. 
	
	\subsection{The Upper Bound}
	
	We now turn our attention to proving the upper bound in Theorem \ref{tbip}. To that end, we let the function $\nu (n,m)$ for $n,m \in \mathbb{N}$ be the maximum number of rainbow spanning trees occurring in any JL-coloring of $K_{n,m}$. We are interested in proving the following theorem:
	
	\begin{theorem}\label{tnu}
		Let $n \geq m \in \mathbb{N}.$ Then 
		
		\[
		\nu (n,m) = m^{n-m+1}((m-1)!)^2.
		\]
	\end{theorem}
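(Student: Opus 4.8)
The plan is to characterize the optimal $JL_b$-tree structure and prove that the claimed value is both achievable and maximal. The number of RSTs in a JL-colored $K_{n,m}$ equals the product over all color classes of their sizes (by \eqref{product}), which via the $JL_b$-tree representation becomes a product over internal vertices: each internal vertex labeled $(r_1,r_2)$ with children $(p_1,p_2)$ and $(q_1,q_2)$ contributes the color-class size $p_1 q_2 + p_2 q_1$. So I want to show that the maximum of $\prod_{\text{internal}} (p_1 q_2 + p_2 q_1)$ over all valid $JL_b$-trees rooted at $(n,m)$ equals $m^{n-m+1}((m-1)!)^2$, and I would proceed by induction on $n+m$.

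First I would identify a candidate extremal tree and verify it achieves the bound. A natural guess, mirroring the $K_n$ case where maximally \emph{unbalanced} splits won, is to repeatedly peel off single vertices. Concretely, I expect the optimal tree to first strip the $n$-side down to size $m$ one vertex at a time (each such split of $(k,m)$ into $(1,0)$ and $(k-1,m)$ contributes a factor of $m$, and there are $n-m+1$ such steps contributing the $m^{n-m+1}$), and then, on the balanced $K_{m,m}$ remaining, peel off vertices alternately from the two sides to produce the $((m-1)!)^2$ factor. I would set up this candidate precisely, compute its product, and confirm it equals $m^{n-m+1}((m-1)!)^2$; this is a routine recursion check.

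The substance is the upper bound via induction. Assume $\nu(a,b) = b^{a-b+1}((b-1)!)^2$ (with the convention $\nu(a,b)=\nu(b,a)$) for all smaller instances. At the root $(n,m)$, an optimal tree splits into children $(p_1,p_2)$ and $(q_1,q_2)$ with $p_1+q_1=n$, $p_2+q_2=m$, contributing a factor $p_1 q_2 + p_2 q_1$ times $\nu(p_1,p_2)\,\nu(q_1,q_2)$ (each child being optimally colored by induction). I would need to show
\[
\max_{\substack{p_1+q_1=n\\ p_2+q_2=m}} (p_1 q_2 + p_2 q_1)\,\nu(p_1,p_2)\,\nu(q_1,q_2) = m^{n-m+1}((m-1)!)^2,
\]
subject to the connectivity constraints on the children. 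Substituting the inductive formula turns this into a concrete (if messy) optimization over the four split parameters, and I would argue the maximum occurs at the single-vertex peel $(p_1,p_2)=(1,0)$, $q=(n-1,m)$, i.e.\ that any more balanced split is strictly worse.

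\textbf{The main obstacle} I anticipate is exactly this optimization step: unlike the clean convexity argument in the $K_n$ case (which reduced to $\binom{r}{p}\ge r$), here the objective $(p_1 q_2 + p_2 q_1)\,\nu(p_1,p_2)\,\nu(q_1,q_2)$ mixes the cut-size term with the two-variable function $\nu$, and the factorial-versus-power structure of $\nu$ is not log-convex in an obvious single parameter. I expect to have to split into cases according to whether the split separates the two sides unevenly (e.g.\ one child has its smaller coordinate zero, forcing it to be a leaf $(1,0)$ or $(0,1)$) versus splits that keep both sides substantial, and to compare ratios $\nu(p_1,p_2)/\nu(p_1{-}1,p_2)$ carefully against the marginal change in the cut term. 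Establishing monotonicity of these ratios — essentially that peeling one vertex at a time dominates any bulk split — is where the real work lies, and I would likely isolate it as a separate lemma before feeding it into the induction.
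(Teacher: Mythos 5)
Your setup matches the paper's: the $JL_b$-tree product formula, induction, and the single-vertex-peeling candidate are exactly the paper's extremal configuration (one small bookkeeping slip: stripping $(n,m)$ down to $(m,m)$ takes $n-m$ steps, and the alternating peel of $K_{m,m}$ contributes $m((m-1)!)^2$, not $((m-1)!)^2$; the total is still $m^{n-m+1}((m-1)!)^2$). The genuine gap is exactly the step you flag as the main obstacle and leave unexecuted: showing that the peel $(1,0)$, $(n-1,m)$ beats every competing split $(p_1,p_2),(q_1,q_2)$ is the entire content of the theorem, and your proposed attack — substituting the closed form of $\nu$ into the four-parameter objective, or proving monotonicity of ratios $\nu(p_1,p_2)/\nu(p_1-1,p_2)$ against marginal cut changes — is not carried out and would fragment badly, since the closed form of $\nu(a,b)$ changes shape depending on which coordinate is larger, so the "messy optimization" splits into many cases with no clean convexity available.

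The paper closes this gap without ever touching the closed form, via an exchange/regrouping argument you could adopt. Given a competing split $(n_1,m_1),(n_2,m_2)$ with (WLOG) $n_1\geq m_1$, induction applied to the child $(n_1,m_1)$ says its own optimal split is a peel, so $\nu(n_1,m_1)=m_1\nu(n_1-1,m_1)$, and the competitor produces $m_1(n_1m_2+n_2m_1)\nu(n_1-1,m_1)\nu(n_2,m_2)$ trees. Now regroup: $(n_1-1,m_1)$ and $(n_2,m_2)$ form a (possibly suboptimal) split of $(n-1,m)$, so by the definition of $\nu$ as a maximum over splits, $\nu(n-1,m)\geq[(n_1-1)m_2+n_2m_1]\,\nu(n_1-1,m_1)\,\nu(n_2,m_2)$. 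Comparing the peel's count $m\,\nu(n-1,m)$ with the competitor therefore reduces to the elementary cut-size inequality $m_1(n_1m_2+n_2m_1)\leq m[(n_1-1)m_2+n_2m_1]$, which upon writing $m=m_1+m_2$ becomes $0\leq(n_1-1)m_2^2+(n_2-1)m_1m_2$. This is immediate when $n_1,n_2>0$; the case $n_1=0$ cannot occur (it would force the invalid label $(0,0)$ since $n_1 \geq m_1$), and the boundary case $n_2=0$ forces $m_2=1$, where the expression equals $n-m\geq0$. So the separate lemma you anticipated needing is just this two-line inequality plus the trivial superadditivity of $\nu$ under regrouping — no ratio monotonicity is required.
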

	
	Observe that proving Theorem \ref{tnu} proves the upper bound in Theorem \ref{tbip}.
	

	
	\begin{proof}[Proof of the upper bound in Theorem \ref{tbip}]

		The proof proceeds by induction. Observe that the upper bound in Theorem \ref{tbip} holds for the base case $K_{1,1}$. We shall prove it holds for $K_{n,m}$. 
		
		Now, we first claim that for a vertex $(a,b)$ with $a \geq b$ in the $JL_b$-tree, the optimal split for producing the most RSTs is the two vertices $(1,0)$ and $(a-1,b)$. Notice that for $K_{n,m}$ with $n \geq m$, this split yields $n-m+1$ color classes of size $m$ and two color classes of each size $1$ through $m-1$. By the observations made above, this split produces $m^{n+m+1}((m-1)!)^2$ RSTs. 
		
		Now, suppose to the contrary that the split described above does not maximize $|\mathcal{R}(K_{n,m}, \varphi)|$. Then there exists some split, $(n_1, m_1)$ and $(n_2,m_2)$ with $n_1 + n_2 = n$, $m_1 + m_2 = m$, of $(n,m)$ that produces more RSTs. We claim this is not the case. 
		
		To that end, notice that either $n_1 \geq m_1$ or $n_2 \geq m_2$. Without loss of generality, suppose $n_1 \geq m_1$ and observe that by induction, $(n_1,m_1)$ splits in the conjectured optimal way. Thus, the number of RSTs produced by this $(n_1, m_1)$ and $(n_2,m_2)$ split is the following:
		
		\begin{align*}
		(n_1m_2 + n_2m_1)\nu(n_1,m_1)\nu(n_2,m_2) &= (n_1m_2 + n_2m_1)m_1\nu(1,0)\nu(n_1-1,m_1)\nu(n_2,m_2)\\
		 &= m_1(n_1m_2 + n_2m_1)\nu(n_1-1,m_1)\nu(n_2,m_2)
		\end{align*}
		
		Now, the number of RSTs produced by the conjectured optimal split, $(1,0)$ and $(n-1,m)$, is $m \nu(n-1,m)$. Thus, proving our claim is equivalent to showing $m \nu(n-1,m) \geq m_1(n_1m_2 + n_2m_1)\nu(n_1-1,m_1)\nu(n_2,m_2)$. 
		
		To that end, observe that 
		\begin{align*}
		m\nu(n-1,m) &= m\nu((n_1-1)+n_2, m_1+m_2)\\
		&\geq m[(n_1-1)m_2 + n_2m_1]\nu(n_1-1, m_1)\nu(n_2,m_2)
		\end{align*}
		
		where the inequality comes from the fact that $(n_1-1, m_1)$ and $(n_2,m_2)$ might be suboptimal splits for $(n-1,m)$.
		
		Therefore, it is enough to show that
		\begin{equation*}
	m_1(n_1m_2 + n_2m_1)\nu(n_1-1,m_1)\nu(n_2,m_2) \leq m[(n_1-1)m_2 + n_2m_1]\nu(n_1-1, m_1)\nu(n_2,m_2).
		\end{equation*} 
		 Using the fact that $m=m_1+m_2$ and rearranging, this is equivalent to showing that
        
        \[
          0 \leq (n_1 - 1)(m_2)^2 + (n_2 - 1)m_1m_2.
        \]
		
		
		
%
		
		If $n_1, n_2 > 0$, then $(n_1 - 1)(m_2)^2 + (n_2 - 1)m_1m_2 \geq 0$. Now, observe that $n_1 \neq 0$ because we assumed $m_1 \leq n_1$ and $(0,0)$ is not a valid vertex in a $JL_b$-tree. Thus, it remains to consider the case where $n_2 = 0$. If $n_2 = 0$ then $m_2 =1$ and thus,
		\begin{align*}
		(n_1 - 1)(m_2)^2 + (n_2 - 1)m_1m_2
		&= n - 1 - (m-1)\\
		&= n-m\\
		&\geq 0
		\end{align*}
		
		It follows that $(n_1 - 1)(m_2)^2 + (n_2 - 1)m_1m_2 \geq 0$, thus completing the proof.
\end{proof}

\section{General Graphs, General Colorings, and Further Questions}

In this section we briefly investigate a few related questions: How do the results above generalize to arbitrary graphs? How do these results generalize to other $n-1$ colorings, when rainbow cycles are allowed?  We note that there are a myriad of interesting open questions in these areas, some of them raised below, that will likely require new ideas to address.

\subsection{General Graphs with JL-colorings}\label{geng}

As noted in the introduction, the number of rainbow spanning trees in a JL-coloring of a general graph is the product of the sizes of the color classes. In \eqref{convexity!} we observed that by convexity,


\[
|E(G)| - (n-2)= \leq  |\mathcal{R}(G, \varphi)| \leq \left( \frac{|E(G)|}{n-1} \right)^{n-1},
\]

where $\varphi$ is a JL-coloring of $G$.

We have seen that, in the case of a complete bipartite graph, the lower bound is actually achievable.  Furthermore, as also observed in the introduction, a rainbow coloring of any tree meets both bounds.  The following are natural questions which arise when considering the strength of these bounds. 

For the remainder of Section \ref{geng}, assume that all colorings are JL-colorings.

\begin{itemize}
	\item {\bf Sharpness of the lower bound:} For what graphs is there a coloring so that the lower bound is sharp?  Can they be characterized?  
	\item {\bf Sharpness of the upper bound:} Are there any non-trivial examples of the sharpness of the upper bound?  The trivial upper bound given above can be strengthened, somewhat, as the sizes of color classes must be integral.  Let $a_1, \dots, a_{n-1}$ be positive integers so that $\sum a_i = |E(G)|$ and $|a_i - a_{j}| \leq 1$, for $1 \leq i < j \leq n-1$.  Then (applying convexity more carefully), 

	\begin{equation}
	|\mathcal{R}(G,\varphi)| \leq \prod_{i=1}^{n-1} a_i.  \label{impub} 
	\end{equation} 
	For what graphs is there a coloring so that \eqref{impub} is sharp?  
	\item {\bf Graphs maximizing rainbow spanning trees:} Note that for the complete graph, the upper bound \eqref{impub} is \emph{not} satisfied and a coloring maximizing $|\mathcal{R}(K_n,\varphi)|$ does not have all color classes the same size.  This leaves open the possibility that some other $n$-vertex graph $G$ has a coloring so that $\max_\varphi |\mathcal{R}(K_n,\varphi)| \leq \max_\varphi |\mathcal{R}(G,\varphi)|$.  Does such a graph exist?  
\end{itemize} 

We give brief answers, partial in some cases, to these questions.  The first question we can answer precisely and we obtain the following.

\begin{theorem} The lower bound \[
	|E(G)| - (n-2) \leq  |\mathcal{R}(G, \varphi)| 
	\]
	is tight for some coloring iff the graph $G$ can be partitioned into two parts $(X,Y)$ so that $G[X]$ and $G[Y]$ are trees and $|e(X,Y)| \geq 1$.  
\end{theorem}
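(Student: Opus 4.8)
The plan is to prove both directions by reducing everything to the size profile of the color classes. The first fact I would record is a sharpened form of the convexity bound \eqref{convexity!}: among $(n-1)$-tuples of positive integers summing to $|E(G)|$, the product $\prod_i |\mathcal{C}_i|$ attains its minimum value $|E(G)|-(n-2)$ if and only if exactly $n-2$ of the classes are singletons and one class has size $|E(G)|-(n-2)$. This is a one-line swap argument: if two classes had sizes $a,b\geq 2$, replacing them by $a+b-1$ and $1$ keeps the sum fixed but multiplies the product by $(a+b-1)/(ab)<1$ (since $(a-1)(b-1)\geq 1$ forces $ab>a+b-1$), strictly dropping it below the claimed minimum. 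Hence equality forces the profile ``$n-2$ ones and one big class,'' and since $G$ is connected we have $|E(G)|\geq n-1$, so the big class is nonempty. Via \eqref{product}, tightness of the lower bound for some JL-coloring is exactly equivalent to the existence of a JL-coloring of $G$ with this profile.

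For the reverse (easy) direction, given a partition $(X,Y)$ with $G[X],G[Y]$ trees and $|e(X,Y)|\geq 1$, I would construct the coloring directly: assign each edge of $G[X]$ and each edge of $G[Y]$ its own distinct color, and paint every edge of $e(X,Y)$ with one common new color. This uses $(|X|-1)+(|Y|-1)+1=n-1$ colors, so it is surjective, and it is rainbow-cycle-free because a cycle avoiding the cut would lie inside a tree (impossible), while a cycle meeting the cut must cross it at least twice and hence repeat the monochromatic cut color. Thus it is a JL-coloring, and its class sizes are $n-2$ ones together with one class of size $|e(X,Y)|=|E(G)|-(n-2)$, realizing the bound.

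For the forward (main) direction, I start from a tight JL-coloring, which by the first paragraph has $n-2$ singleton edges and one big class $\mathcal{C}^{*}$. The key structural step is to read the partition off the singletons: the $n-2$ singleton edges all carry distinct colors, so any cycle among them would be rainbow, and rainbow-cycle-freeness therefore forces them to be acyclic; a forest on $n$ vertices with $n-2$ edges has exactly two components, whose vertex sets I take to be $X$ and $Y$, each carrying a spanning tree of $G[X]$ (resp. $G[Y]$) built from singleton edges. It then remains to show $G[X]$ and $G[Y]$ contain no further edges, i.e. that every edge of $\mathcal{C}^{*}$ crosses between $X$ and $Y$. This is the crux, and it again uses the rainbow-cycle obstruction: an edge of $\mathcal{C}^{*}$ lying inside $X$ would close a cycle together with the unique tree-path joining its endpoints, and that cycle would consist of one $\mathcal{C}^{*}$-edge plus distinctly-colored singletons, hence be rainbow, a contradiction. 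Therefore $e(X,Y)=\mathcal{C}^{*}$, so $G[X]$ and $G[Y]$ are precisely the two trees and $|e(X,Y)|=|E(G)|-(n-2)\geq 1$.

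The step I expect to carry the real content is the forward direction's claim that $\mathcal{C}^{*}$ lives entirely across the cut; the profile characterization and the construction are routine by comparison. I would emphasize that this argument never invokes Proposition \ref{prop1} or the JL-tree machinery: it needs only the product formula \eqref{product} and rainbow-cycle-freeness, which keeps the proof short and self-contained.
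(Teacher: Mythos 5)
Your proof is correct and follows essentially the same route as the paper's: the reverse direction uses the identical coloring construction, and the forward direction likewise reads the partition off the singleton color classes (which form a two-component forest by rainbow-cycle-freeness) and concludes that the one large class is exactly the cut. The only difference is that you write out steps the paper leaves implicit, namely the equality case of the convexity bound (via the swap argument) and the rainbow-cycle argument showing no edge of $\mathcal{C}^{*}$ can lie inside $X$ or $Y$.
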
 
\noindent {\bf Remark:} This is not the traditional presentation of $K_{n,m}$, where we have already observed this bound to be tight.  We note, however, that $K_{n,m}$ can also be thought of as two stars, $K_{1,{m-1}}$ and $K_{{n-1},1}$, along with a complete bipartite graph between the leaves and a single edge connecting the roots.  

\begin{proof} 
	If $G$ has the desired form, then one colors each of the trees in a rainbow way, with each color used once and each tree using disjoint sets of colors, and then the bipartite graph on $(X,Y)$ a distinct color.  Then the coloring has no rainbow cycle (as any cycle must use multiple edges of the bipartite graph $(X,Y)$, uses $(|X|-1)+(|Y|-1) + 1 = n-1$ colors, and furthermore, only one color class has size larger than one so the lower bound is realized.  
	
	In the other direction, suppose $G$ has a JL-coloring realizing the lower bound.  Such a coloring has at most one color class of size larger than one.  If each is of size one, $G$ is a tree, which is of the desired form with $X$ and $Y$ being any partition into connected subtrees.  So suppose $G$ is not a tree. Since the coloring is rainbow cycle free, the color classes of size one induce a forest with two components ($X$ and $Y$); and the remaining (larger) color class forms a bipartite graph between them, as desired. 
\end{proof} 

In the complete graph, however, the lower bound is exponential and this leaves many related open questions.  In particular, can one characterize graphs for which this number grows exponentially (or polynomially)?  Is it true, for instance, that in a non-bipartite expander graph $|\mathcal{R}(G,\varphi)|$ is necessarily exponential in the number of vertices?  

We answer the second of questions as follows:

\begin{theorem} 
	Let $G$ be a connected graph and let $a_1, a_2, \dots, a_n $ denote positive integers so that $\sum_{i=1}^{n-1} a_i = |E(G)|$ and $|a_i - a_j| \leq 1$.  Then, as noted above, convexity implies that
	\[
	|\mathcal{R}(G,\varphi)| \leq \prod_{i=1}^{n-1} a_i \leq \left( \frac{|E(G)|}{n-1} \right)^{n-1}.
	\]
If $|E(G)| \geq 2(n-1)$, then the first inequality is strict.  
\end{theorem}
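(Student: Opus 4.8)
The plan is to reduce the claim to a purely combinatorial statement about the multiset of color class sizes. Recall from \eqref{product} that $|\mathcal{R}(G,\varphi)| = \prod_{i=1}^{n-1}|\mathcal{C}_i|$, and that $\prod_{i=1}^{n-1} a_i$ is precisely the \emph{maximum} of $\prod x_i$ over all ways of writing $|E(G)| = \sum_{i=1}^{n-1} x_i$ as a sum of $n-1$ positive integers. This maximum is attained \emph{only} by the balanced distribution (the $x_i$ pairwise differing by at most one): any unbalanced distribution can be strictly improved, since if $x_i \le x_j - 2$ then replacing the pair $(x_i,x_j)$ by $(x_i+1,x_j-1)$ multiplies the product by $(x_i+1)(x_j-1)/(x_ix_j) > 1$. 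Consequently the first inequality is an equality if and only if the sizes $|\mathcal{C}_1|, \dots, |\mathcal{C}_{n-1}|$ are themselves balanced, and to prove strictness it suffices to show that under the hypothesis $|E(G)| \ge 2(n-1)$ these sizes cannot be balanced.

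First I would produce a color class of size exactly one using the binary-tree representation of $\varphi$ described in Section \ref{structure}. Since $G$ is connected with $n \ge 2$ vertices, this tree has at least one internal vertex; choosing an internal vertex $w$ of maximum depth forces both of its children to be leaves (if a child were internal it would have strictly greater depth), so they are labeled by single vertices $\{u\}$ and $\{v\}$. The color class associated to $w$ is exactly $e(\{u\},\{v\})$, which by the remark following Proposition \ref{prop1} is a nonempty color class; as $G$ is simple and the two sides are single vertices, $e(\{u\},\{v\})$ is the single edge $uv$. Hence some $|\mathcal{C}_i| = 1$.

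Finally I would combine the two observations. Because $\sum_i a_i = |E(G)| \ge 2(n-1)$ is a sum of $n-1$ balanced positive integers, every part satisfies $a_i \ge 2$: if some $a_j = 1$ then balance forces each $a_i \le 2$, whence $\sum_i a_i \le 2(n-1)$, and combined with the hypothesis this gives $\sum_i a_i = 2(n-1)$ with all $a_i = 2$, contradicting $a_j = 1$. Thus the balanced target has all parts at least two, whereas the actual color-class multiset contains a part equal to one; the two multisets therefore differ, and by the uniqueness of the maximizer we get $|\mathcal{R}(G,\varphi)| = \prod_i |\mathcal{C}_i| < \prod_i a_i$. I expect the only genuine subtlety to be the clean extraction of the singleton class from the tree (the maximum-depth argument together with the simple-graph observation that a cut between two single vertices is a single edge); the balancing bound and the strict-product inequality are routine.
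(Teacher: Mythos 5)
Your proposal is correct and takes essentially the same route as the paper's proof: both argue that the hypothesis $|E(G)| \geq 2(n-1)$ forces every $a_i \geq 2$, while the tree decomposition of a JL-coloring from Section \ref{structure} (the paper's ``iterative partitioning ends with two parts of size one,'' your maximum-depth internal vertex) always produces a color class of size exactly one, so the balanced maximizer can never be attained. Your write-up simply makes explicit two points the paper leaves implicit --- the exchange argument showing the balanced multiset is the \emph{unique} maximizer of the product, and the precise extraction of the singleton color class.
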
 
\noindent{\bf Remark:} This inequality is tight for some coloring when $G$ is a tree, but is also easily seen to be tight when $G$ is unicyclic (that is, when $|E(G)| = n$).  An interesting open question would be to find the largest $|E(G)|$ for an $n$-vertex graph $G$ where this inequality can be tight. 

\begin{proof} 
	If $|E(G)| \geq 2(n-1)$, then the values $a_i$ satisfying the hypothesis of the theorem are all at least two.  On the other hand, the tree decomposition of a JL-coloring described in Section 2, by iteratively partitioning the graph, ends with two parts of size one -- and hence, with a color class of size one.  Thus, in any JL-coloring $|C_i| = 1$ for some $i$ and the bound on the product given is never sharp.
\end{proof} 
This leaves open the rather interesting question of whether there is a general improvement to \eqref{impub}. 

Finally we answer the third question completely with the following.
\begin{theorem}
	If $G$ is an $n$-vertex non-complete graph, then
	\[	\max_\varphi |\mathcal{R}(K_n,\varphi)| > \max_\varphi |\mathcal{R}(G,\varphi)|.
	\]
	
\end{theorem}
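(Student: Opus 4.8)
The plan is to use the tree representation of JL-colorings from Section \ref{structure} to compare an arbitrary JL-coloring of $G$ against the corresponding coloring of $K_n$, and then to extract strictness from a careful analysis of the equality case. Recall that the upper bound in Theorem \ref{tmain} gives $\max_\varphi |\mathcal{R}(K_n,\varphi)| = (n-1)!$. We may assume $G$ is connected, since otherwise no rainbow spanning tree exists and $\max_\varphi |\mathcal{R}(G,\varphi)| = 0 < (n-1)!$. It therefore suffices to show that \emph{every} JL-coloring $\varphi$ of $G$ satisfies $|\mathcal{R}(G,\varphi)| < (n-1)!$.

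First I would fix a JL-coloring $\varphi$ of $G$ and pass to its associated binary tree from Proposition \ref{prop1}, in which an internal vertex labeled by a set $A$ with children $A'$ and $\bar{A}'$ of sizes $p$ and $q$ corresponds to a color class equal to the cut $e(A',\bar{A}')$. Since the bipartite graph between $A'$ and $\bar{A}'$ is a subgraph of $K_{p,q}$, this color class has size at most $pq$, with equality if and only if $G$ contains every edge between $A'$ and $\bar{A}'$. Writing $c_i$ for the size of the $i$th color class and $p_i q_i$ for the corresponding product of children sizes, \eqref{product} gives
\[
|\mathcal{R}(G,\varphi)| = \prod_{i=1}^{n-1} c_i \le \prod_{i=1}^{n-1} p_i q_i,
\]
and the right-hand side is exactly $|\mathcal{R}(K_n,\varphi')|$ for the JL-coloring $\varphi'$ of $K_n$ having the same tree shape (which exists, as every part of a partition of $K_n$ induces a connected complete graph), and this is at most $(n-1)!$ by the upper bound of Theorem \ref{tmain}.

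It remains to rule out equality, and this is the crux. If $|\mathcal{R}(G,\varphi)| = (n-1)!$, then both inequalities above are equalities. Equality in the second forces the tree shape to be the maximizer identified in the proof of the upper bound; since $\binom{r}{p} = r$ holds only for $p \in \{1,r-1\}$, this maximizer is unique, namely the ``caterpillar'' in which a vertex labeled $r$ splits as $(r-1,1)$. Equality in the first forces every cut to be a complete bipartite graph. I would then peel off the singletons in the order the caterpillar removes them: the set of size $k$ is some $\{w_1,\dots,w_k\}$, splitting into $\{w_1,\dots,w_{k-1}\}$ and $\{w_k\}$, and completeness of this cut means $w_k$ is joined to all $k-1$ vertices below it. Ranging over $k = 2,\dots,n$ shows every pair of vertices is adjacent, i.e. $G = K_n$, contradicting the hypothesis that $G$ is non-complete. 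Hence no JL-coloring of $G$ attains $(n-1)!$, and $\max_\varphi |\mathcal{R}(G,\varphi)| < (n-1)! = \max_\varphi |\mathcal{R}(K_n,\varphi)|$. The main obstacle is precisely this last structural deduction — that simultaneous completeness of every cut in the caterpillar decomposition forces the entire graph to be complete — although the peeling argument makes it routine once the equality case has been set up.
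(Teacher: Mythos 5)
Your proof is correct, and its engine---representing a JL-coloring of $G$ by its decomposition tree from Proposition \ref{prop1}, bounding each color class by the corresponding complete bipartite cut, $c_i \le p_i q_i$, and thereby comparing against the JL-coloring of $K_n$ with the same tree shape---is the same as the paper's. Where you diverge is how strictness is extracted. The paper gets it in one line, locally: since $G$ is non-complete, some non-adjacent pair $u,v$ is separated at some internal node of the tree (their lowest common ancestor), so that single cut satisfies $c_i < p_i q_i$ and the product strictly increases when the cuts are augmented to complete bipartite graphs; no analysis of the extremal value $(n-1)!$ is needed at all. You instead run the chain all the way to $(n-1)!$ and rule out equality, which obliges you to prove an extra ingredient the paper never uses: that the caterpillar is the \emph{unique} tree shape attaining $(n-1)!$ (your appeal to $\binom{r}{p}=r$ iff $p \in \{1,r-1\}$ does yield this by induction, so the step is sound). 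That detour is unnecessary: your own peeling observation---a complete cut separating two vertices makes them adjacent---applies to \emph{any} tree shape, not just the caterpillar, because every pair of vertices is split apart at exactly one internal node. Applied to the first inequality alone, it says that if all cuts are complete then $G = K_n$; contrapositively, $G$ non-complete forces $\prod c_i < \prod p_i q_i \le (n-1)!$, which is precisely the paper's argument. So both proofs rest on the same comparison, but the paper's strictness is local (one deficient cut suffices), while yours is global (an equality-case analysis of the maximum), purchased at the cost of the uniqueness lemma.
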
 
\begin{proof} 
	This follows immediately from the decomposition of JL-colored graphs given in Section 2. Given a graph $G$ and cut $(A,\bar{A})$ in the decomposition of $G$ guaranteed by Proposition \ref{prop1}, increasing the number edges in such a cut gives a JL-colored graph with more edges in the color class (and hence, more rainbow spanning trees).  Iterating eventually gives a JL-colored complete graph. This has more rainbow spanning trees than in $G$, as not all of the cuts augmented were originally complete (as $G$ is not complete).
\end{proof} 

\subsection{General Colorings} 

Another interesting set of questions deals with the case where instead of JL-colorings, one considers general colorings.  As noted in the introduction, if too general colorings are allowed, the question of counting RSTs can become trivial. To this end, for an $n$ vertex graph $G$, let

\begin{align*}
\mathcal{J}(G) &= \{\phi: E(G) \to [n-1]:  \mbox{$\varphi$ is a JL-coloring}\}, \mbox{ and}\\
\mathcal{C}(G) &= \{\phi: E(G) \to [n-1]\}
\end{align*} 
denote the set of JL-colorings and set of general colorings, possibly with rainbow cycles, but restricted to only having $n-1$ colors.  It is easy to see that 
\[
0 = \min_{\varphi \in \mathcal{C}(G)} |\mathcal{R}(G,\varphi)| < \min_{\varphi \in \mathcal{J}(G)} |\mathcal{R}(G,\varphi)|, 
\]
and that this triviality of minimizing the number of rainbow spanning trees continues to hold for graphs with sufficiently many edges, even if the colorings are assumed to be surjective.  

The question of \emph{maximizing} the number of rainbow spanning trees, however, seems quite interesting.  In particular we raise the following question.  

\noindent{\bf Question:} Is it true that 
\[
\max_{\varphi \in \mathcal{C}(K_n)} |\mathcal{R}(K_n,\varphi)| = \max_{\varphi \in \mathcal{J}(K_n)} |\mathcal{R}(K_n,\varphi)| = (n-1)!
\]

The inequality $\max_{\varphi \in \mathcal{J}} |\mathcal{R}(K_n,\varphi)| \leq \max_{\varphi \in \mathcal{C}} |\mathcal{R}(K_n,\varphi)|$ is trivial, as the maximization is over a smaller set. The inequality in the other direction, that $ \max_{\varphi \in \mathcal{J}} |\mathcal{R}(K_n,\varphi)| \geq \max_{\varphi \in \mathcal{C}} |\mathcal{R}(K_n,\varphi)|$ initially appeared unlikely to us, but after some experimentation and thought, it seems plausible. We can show, at least, that colorings with more rainbow spanning trees than the maximizing JL-coloring are quite rare.

\begin{theorem}
	Let $\mathcal{C}'(K_n)  \subseteq \mathcal{C}(K_n)$ denote the set of colorings $\varphi$ of $E(K_n)$ satisfying $\mathcal{R}(K_n,\varphi) \geq (n-1)!$.  Then
	\[
	\lim_{n \to \infty} \frac{|\mathcal{C}'(K_n)|}{|\mathcal{C}(K_n)|} = 0.
	\]  \label{MTT}
\end{theorem}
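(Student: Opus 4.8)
The plan is to bound $|\mathcal{C}'(K_n)|$ from above by a quantity that is a vanishingly small fraction of $|\mathcal{C}(K_n)| = (n-1)^{\binom{n}{2}}$. The key intuition is that a coloring with many rainbow spanning trees must use its $n-1$ colors quite \emph{evenly}, since $|\mathcal{R}(K_n,\varphi)|$ is controlled by products of color-class sizes, and such balanced colorings are exponentially rare among all surjective $(n-1)$-colorings of the $\binom{n}{2}$ edges of $K_n$. First I would establish an upper bound on $|\mathcal{R}(K_n,\varphi)|$ valid for \emph{arbitrary} (not necessarily JL) colorings in terms of the color-class sizes $|\mathcal{C}_1|, \dots, |\mathcal{C}_{n-1}|$. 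For a general coloring rainbow cycles may exist, so the clean product formula \eqref{product} fails; however, every rainbow spanning tree selects at most one edge of each color, so a crude but serviceable bound is $|\mathcal{R}(K_n,\varphi)| \leq \prod_{i=1}^{n-1}|\mathcal{C}_i|$ (choosing one edge per color already overcounts the trees, since not every such transversal is a spanning tree). This is the main structural input: it reduces the problem to counting colorings whose class-size product is large.

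Next I would translate the condition $\prod_i |\mathcal{C}_i| \geq (n-1)!$ into a constraint forcing the sizes to be highly balanced. Since $\sum_i |\mathcal{C}_i| = \binom{n}{2}$, the average class size is $\binom{n}{2}/(n-1) = n/2$. By the AM-GM / convexity argument underlying \eqref{convexity!}, the product $\prod_i |\mathcal{C}_i|$ is maximized when all sizes equal $n/2$, giving roughly $(n/2)^{n-1}$; and Stirling shows $(n-1)! \approx (n/e)^{n-1}$, which is \emph{close} to this maximum (within an exponential factor $(e/2)^{n-1}$). The point is that requiring the product to reach $(n-1)!$ confines the size vector $(|\mathcal{C}_1|,\dots,|\mathcal{C}_{n-1}|)$ to a small neighborhood of the perfectly balanced vector: any class significantly larger or smaller than $n/2$ depresses the product below $(n-1)!$. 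Concretely, I would show that if the product is at least $(n-1)!$, then the number of such admissible size-vectors, weighted by the multinomial coefficient $\binom{\binom{n}{2}}{|\mathcal{C}_1|,\dots,|\mathcal{C}_{n-1}|}$ counting colorings with those class sizes, is small relative to $(n-1)^{\binom{n}{2}}$.

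Then I would carry out the counting itself. Write $|\mathcal{C}'(K_n)| \leq \sum_{\vec{a}} \binom{\binom{n}{2}}{a_1,\dots,a_{n-1}}$, where the sum is over size-vectors $\vec{a} = (a_1,\dots,a_{n-1})$ with $\sum a_i = \binom{n}{2}$ and $\prod a_i \geq (n-1)!$. I would compare this to $(n-1)^{\binom{n}{2}} = \sum_{\vec{a}} \binom{\binom{n}{2}}{a_1,\dots,a_{n-1}}$ (the full multinomial expansion). The ratio is the probability, under a multinomial distribution where each of the $\binom{n}{2}$ edges is assigned one of $n-1$ colors uniformly at random, that the resulting class-size product exceeds $(n-1)!$. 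Under this uniform model each class size concentrates around its mean $n/2$ with fluctuations of order $\sqrt{n}$, and a large-deviations estimate should show that the product exceeding $(n-1)!$—which demands the sizes be \emph{atypically} balanced simultaneously across all $n-1$ classes, a collective event far out in the tail—has probability tending to $0$. The cleanest route is probably to fix the product threshold and use a union bound or a direct entropy/Stirling comparison of the relevant multinomial coefficients against the total.

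The main obstacle I anticipate is the large-deviations / concentration step in the last paragraph: controlling the \emph{joint} behavior of all $n-1$ class sizes simultaneously, rather than each marginally, since the sizes are not independent (they are constrained to sum to $\binom{n}{2}$) and we need the product over all of them to be large. Establishing that the balanced region carries negligible probability requires a careful Stirling-type estimate quantifying how fast $\binom{\binom{n}{2}}{a_1,\dots,a_{n-1}}/(n-1)^{\binom{n}{2}}$ decays as $\vec a$ moves toward balance, together with a bound on the number of lattice points $\vec a$ satisfying $\prod a_i \geq (n-1)!$. A potential subtlety is that the crude bound $|\mathcal{R}|\leq \prod|\mathcal{C}_i|$ may be lossy for non-JL colorings; if it proves too weak I would instead invoke the matrix-tree analogue (Theorem~\ref{MTT}'s companion) or sharpen the per-coloring estimate, but the transversal bound should suffice since we only need an upper bound on $|\mathcal{C}'(K_n)|$.
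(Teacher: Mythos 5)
There is a genuine and fatal gap in your second step: the event you propose to show is rare is in fact typical. Under the uniform model (each of the $\binom{n}{2}$ edges independently colored from $[n-1]$), every class size $|\mathcal{C}_i|$ concentrates at its mean $n/2$ with fluctuations $O(\sqrt{n\log n})$, simultaneously for all $i$ with high probability. Consequently the product satisfies, w.h.p.,
\[
\prod_{i=1}^{n-1}|\mathcal{C}_i| \;\geq\; \bigl(n/2 - O(\sqrt{n\log n})\bigr)^{n-1} \;=\; (n/2)^{n-1} e^{-O(\sqrt{n\log n})},
\]
which exceeds $(n-1)! \approx \sqrt{2\pi n}\,\bigl((n-1)/e\bigr)^{n-1}$ by a factor of essentially $(e/2)^{n-1}$. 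So the fraction of colorings with $\prod_i |\mathcal{C}_i| \geq (n-1)!$ tends to $1$, not $0$: balanced colorings are not ``exponentially rare,'' they are the norm, and the exponential slack $(e/2)^{n-1}$ you dismiss as ``close'' is exactly where all typical colorings live. You flagged that the transversal bound $|\mathcal{R}(G,\varphi)| \leq \prod_i|\mathcal{C}_i|$ ``may be lossy,'' but the situation is worse than lossy: the discrepancy between the product and the true count of rainbow spanning trees (caused by transversals containing cycles) is precisely the content of the theorem, so no argument that only sees class sizes can possibly succeed.

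The paper's proof sidesteps class sizes entirely with a one-line first-moment argument, which is the ingredient your outline is missing. For a uniform random coloring $\varphi$, a \emph{fixed} spanning tree $T$ (which has $n-1$ edges) is rainbow with probability $(n-1)!/(n-1)^{n-1}$; by Cayley's formula and linearity of expectation,
\[
\mathbb{E}\bigl[|\mathcal{R}(K_n,\varphi)|\bigr] = n^{n-2}\,\frac{(n-1)!}{(n-1)^{n-1}} = \left(\frac{n}{n-1}\right)^{n-2}\cdot\frac{(n-1)!}{n-1} \leq \frac{e}{n-1}\,(n-1)!,
\]
and Markov's inequality gives $|\mathcal{C}'(K_n)|/|\mathcal{C}(K_n)| = \mathbb{P}\bigl(|\mathcal{R}(K_n,\varphi)| \geq (n-1)!\bigr) \leq e/(n-1) \to 0$. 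Note how this computation automatically accounts for the cycle obstruction: it counts trees, not one-edge-per-color transversals, which is why the expectation comes out a factor of roughly $n$ below $(n-1)!$ even though the typical class-size product is exponentially above it. If you want to salvage your framework, you would have to replace the product bound by an estimate of how many rainbow transversals are acyclic in a balanced coloring --- and that is, in effect, the expectation computation above.
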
  
\begin{proof} 
	Let $\varphi$ denote a uniform random coloring of the edges of $K_n$ so that the color of each edge is independently and uniformly chosen from $[n-1]$.  For a fixed spanning tree $T$, the probability that $T$ is rainbow is $(n-1)!/(n-1)^{n-1}$.  Then by Cayley's formula and linearity of expectation
	\[
	\mathbb{E}\Big[|\mathcal{R}(K_n,\varphi)|\Big] = n^{n-2} \frac{(n-1)!}{(n-1)^{n-1}} = \left(\frac{n}{n-1}\right)^{n-2} \cdot \frac{1}{n-1} \cdot (n-1)! \leq \frac{e}{n-1} \cdot  (n-1)!.  
	\]	
	The result then follows by Markov's inequality, as 
	\[
	\frac{|\mathcal{C}'(K_n)|}{|\mathcal{C}(K_n)|} = \p\Big(|\mathcal{R}(K_n,\varphi)| \geq (n-1)! \Big) \leq  \frac{e}{n-1} \to 0.
	\]
\end{proof} 

In general, understanding $|\mathcal{R}(G,\varphi)|$ for an arbitrary $\varphi \in \mathcal{C}$ seems difficult.  It is clear that, if $\mathcal{C}_1, \dots, C_{n-1}$ are the color classes of $\varphi$, then 
\[
|\mathcal{R}(G,\varphi)| \leq \prod_{i=1}^{n-1} |\mathcal{C}_i|.
\]
The inequality is strict when collections of $n-1$ edges, one of each color, include cycles.  Understanding these collections in a simple way, however, seems difficult.  

As a first step in this direction, we observe that we can prove an analogue of the matrix tree theorem of Kirchoff, which gives a way of counting rainbow spanning trees in a general graph.  

Recall that the combinatorial Laplacian matrix of a graph is
the matrix 
\[
L = D - A, 
\]
where $D$ is a diagonal matrix consisting of vertex degrees and $A$ is the adjacency matrix.  Then the matrix tree theorem states that the determinant of any cofactor of $L$ is the number of spanning trees in this graph.  

We generalize this result to colored graphs.  Because we deal with $n-1$ edge colored graphs, and because the statement is cleaner in this case, we focus on the $n-1$ colored case.  Given a graph $G$ and an edge coloring $\varphi: E(G) \to [n-1]$, we define the colored graph Laplacian $L_\phi$ of $G$ so that
\[
[L_{\varphi}]_{ij} = \left\{ \begin{array}{cl} 0 & \mbox{ if $i \neq j$,  $v_i \not\sim v_j$} \\
-c_{\varphi(v_iv_j)} & \mbox{ if $i \neq j$ and $v_i \sim v_j$}\\
\sum\limits_{k: v_i \sim v_k} c_{\varphi(v_iv_j)} & \mbox{ if $i=j$}
\end{array} \right.,
\]
where $c_{i}$ for $i = 1, \dots, n-1$ are indeterminates.   Note that if one sets $c_i = 1$, for all $i$, then one recovers the ordinary graph Laplacian, as above.   

\begin{theorem}[Matrix Tree Theorem for Rainbow Spanning Trees] 
	Let $G$ be a graph and $\varphi:E(G) \to [n-1]$ an edge coloring of G.  Let $L_\varphi$ of $G$ be the colored graph Laplacian defined above.  Let $L'$ denote a principle cofactor of $L_\varphi(G)$ and
	\[
	f(c_1, \dots, c_{n-1}) = \det L'.  
	\]
	Then \[
	|\mathcal{R}(G, \varphi)| = [f(c_1, \dots, c_{n-1})]_{c_1c_2\dots c_{n-1}} = \frac{\partial}{\partial c_1} \frac{\partial}{\partial c_2} \cdots \frac{\partial}{\partial c_{n-1} }  \det L'.  
	\]
\end{theorem}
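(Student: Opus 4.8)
The plan is to recognize $L_\varphi$ as the \emph{weighted} Laplacian of $G$ in which each edge $e$ carries the indeterminate weight $c_{\varphi(e)}$, and then to apply the weighted analogue of Kirchhoff's matrix tree theorem. Concretely, fix an arbitrary orientation of $G$ and let $B$ be the corresponding signed vertex--edge incidence matrix and $W = \diag(c_{\varphi(e)} : e \in E(G))$. One checks directly from the definition that $L_\varphi = B W B^{T}$, and deleting the row and column indexed by the vertex $v_r$ removed to form the principal cofactor gives $L' = B_0 W B_0^{T}$, where $B_0$ is $B$ with the row for $v_r$ deleted. I would then apply the Cauchy--Binet formula to expand $\det L' = \det(B_0 W B_0^T)$ as a sum over $(n-1)$-subsets $S \subseteq E(G)$ of $\det(B_0[S])\,\bigl(\prod_{e\in S} c_{\varphi(e)}\bigr)\,\det(B_0[S])$. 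Since the squared minor $\det(B_0[S])^2$ equals $1$ when $S$ is the edge set of a spanning tree and $0$ otherwise (the standard fact underlying the matrix tree theorem), this yields the clean identity
\[
\det L' \;=\; \sum_{T} \prod_{e \in T} c_{\varphi(e)},
\]
where the sum ranges over all spanning trees $T$ of $G$.

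Next I would read off the monomial structure of this polynomial. For a spanning tree $T$, write $d_i(T)$ for the number of edges of $T$ receiving color $i$; then $\prod_{e\in T} c_{\varphi(e)} = \prod_{i=1}^{n-1} c_i^{\,d_i(T)}$. Because $T$ has exactly $n-1$ edges, $\sum_{i} d_i(T) = n-1$, and because there are exactly $n-1$ colors, the monomial is squarefree --- that is, equal to $c_1 c_2 \cdots c_{n-1}$ --- precisely when every $d_i(T) = 1$, i.e. precisely when $T$ is rainbow. Every non-rainbow tree omits some color (exponent $0$) and repeats another (exponent $\geq 2$). Thus the coefficient of the monomial $c_1 c_2 \cdots c_{n-1}$ in $f = \det L'$ counts exactly the rainbow spanning trees, establishing $[f]_{c_1\cdots c_{n-1}} = |\mathcal{R}(G,\varphi)|$.

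Finally I would verify that the differential-operator form computes the same quantity. Applying $\frac{\partial}{\partial c_1}\cdots \frac{\partial}{\partial c_{n-1}}$ to a single tree monomial $\prod_i c_i^{d_i(T)}$ gives $0$ as soon as some $d_i(T) = 0$, since $\frac{\partial}{\partial c_i}$ annihilates any monomial not containing $c_i$. Hence only the rainbow (squarefree) monomials survive, and on each such monomial $c_1 \cdots c_{n-1}$ the operator returns the constant $1$. The crucial point making this exact --- rather than merely extracting a leading part --- is the homogeneity just noted: every tree contributes a monomial of total degree exactly $n-1$, so no monomial can have all $n-1$ exponents positive \emph{and} some exponent exceeding $1$. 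Summing over $T$ therefore gives $\frac{\partial}{\partial c_1}\cdots\frac{\partial}{\partial c_{n-1}} \det L' = |\mathcal{R}(G,\varphi)|$, as claimed.

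The main obstacle is essentially bookkeeping rather than conceptual: the one nontrivial ingredient is the weighted matrix tree theorem, which I would obtain via the Cauchy--Binet argument above (equivalently, by substituting $w_e = c_{\varphi(e)}$ into the standard proof of Kirchhoff's theorem). Everything after that is the degree argument, whose only subtlety is confirming that the $(n-1)$-edge versus $(n-1)$-color balance forces squarefreeness to coincide exactly with the rainbow condition.
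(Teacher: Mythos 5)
Your proposal is correct and follows essentially the same route as the paper's proof: factor $L'$ through an incidence matrix, apply Cauchy--Binet, use the fact that the squared tree minors equal $1$ (and cycle minors vanish) to obtain $\det L' = \sum_T \prod_{e\in T} c_{\varphi(e)}$, and then extract the squarefree coefficient via homogeneity. The only cosmetic difference is that you keep the weights in a diagonal matrix $W$ with a standard $\pm 1$ incidence matrix, whereas the paper absorbs $\sqrt{c_{\varphi(e)}}$ into the incidence matrix entries --- these are interchangeable formulations of the same argument.
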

\noindent{\bf Remark:} The proof, a simple modification of the usual proof of the matrix tree theorem, actually shows that $\det L'$ is a generating function for different colorings of spanning trees.  This remains true for colorings with more than $n-1$ colors.  Rainbow spanning trees, in this setting, are counted by the coefficients of squarefree terms.  The advantage in stating the $n-1$ color case is that there is only one such term.   

\begin{proof} 
	The proof largely follows that of the standard matrix tree theorem.  
	
	Let $B_\varphi$ be a $|V| \times |E|$ matrix, indexed by vertices and edges respectively.  The column indexed by edge $v_iv_j$  has non-zero entries only in the $v_i$ and $v_j$ positions: one of these is set to be $\sqrt{c_{\varphi(v_iv_j)}}$ and the other $-\sqrt{c_{\varphi(v_iv_j)}}$, with the signing chosen arbitrarily.  Then it is easy to check that
	\[
	L_\varphi = B_\varphi B_\varphi^T,
	\]
	just as with the standard Laplacian.  If the $v_i$th row and column of the Laplacian are removed, then $L' = B'(B')^T$, where $B'$ is obtained by removing the $v_i$th row of $B_{\varphi}$.  
	
	Then, by the Cauchy-Binet formula, 
	\begin{align*}
	f(c_1, \dots, c_{n-1}) = \det L' &= \det (B')(B')^T
	\\
	&= \sum_{\substack{A \subset E(G) \\ |A|=n-1}} \det(B'|_{A}) \det((B')^T|_A)  \\
	&= \sum_{\substack{A \subset E(G) \\ |A|=n-1}} \det(B'|_{A})^2,
	\end{align*}
	and it is straightforward to verify that
	\[
	\det(B'|_A) = \left\{ \begin{array}{cl} 0 &\mbox{ if the edges in $A$ contain a cycle} \\
	\pm \prod_{e \in A} \sqrt{c_{\varphi(e)}} & \mbox{ if the edges in $A$ form a spanning tree} \end{array}\right. 
	\]
	Thus,
	\[
	f(c_1, \dots, c_{n-1}) = \sum_{\substack{T~spanning\\tree~ of~G}} \prod_{e \in T} c_{\varphi(e)}.    
	\]
	Then the number of rainbow spanning trees is exactly the coefficient of the monomial where each of the $c_i$s has degree one, as claimed.  As this polynomial is homogenous of degree $n-1$ in the variables $c_i$, the coefficient can be recovered by iteratively taking derivatives.
\end{proof}

\end{document}